\def\--{\mbox{--}}
\newcommand{\B}[1]{B$_{#1}$-VPG}
\newcommand{\comp}[1]{\overline{#1}}
\newcommand{\p}[2]{\ensuremath{\prec_D^{{#1},{#2}}}}
\def\n{2}
\definecolor{darkseagreen}{rgb}{0.56, 0.74, 0.56}
\definecolor{forestgreen}{rgb}{0.13, 0.55, 0.13}
\definecolor{darkblue}{rgb}{0, 0, 102}
\definecolor{lightblueArrow}{rgb}{51, 51, 255}
\definecolor{lime}{HTML}{A6CE39}
\DeclareRobustCommand{\orcidicon}{
	\begin{tikzpicture}
	\draw[lime, fill=lime] (0,0) circle [radius=0.16] 
	node[white] {{\fontfamily{qag}\selectfont \tiny ID}};
	\draw[white, fill=white] (-0.0625,0.095) 
	circle [radius=0.007];
	\end{tikzpicture}
}
\xdef\csname orcid\x\endcsname{\noexpand\href{https://orcid.org/\csname orcidauthor\x\endcsname}{\noexpand\orcidicon}}
\begin{document}
	\title{Characterization and a 2D Visualization of \B0 Cocomparability Graphs}
	\titlerunning{Characterization \& a 2D Visualization of \B0 Cocomparability Graphs}
	%
	\author{Sreejith K. Pallathumadam\orcidSKP \and
		Deepak Rajendraprasad\orcidDR}
	
	\authorrunning{S. K. Pallathumadam and D. Rajendraprasad}
%
%
\institute{Indian Institute of Technology Palakkad, India}

\maketitle              
%
	\begin{abstract}
		\emph{\B0 graphs} are intersection graphs of vertical and horizontal
		line segments on a plane.
		Cohen, Golumbic, Trotter, and Wang [Order, 2016] pose the question of
		characterizing \B0 permutation graphs.
		We respond here by characterizing \B0 cocomparability graphs. 
		This characterization also leads to a polynomial time recognition 
		and \B0 drawing algorithm for the class.
		Our \B0 drawing algorithm starts by fixing any one of the many posets $P$
		whose cocomparability graph is the input graph $G$. 
		The drawing we obtain not only visualizes  $G$ in that one can distinguish
		comparable pairs from incomparable ones, but one can also identify which
		among a comparable pair is larger in $P$ from this visualization.
	
	\keywords{Poset Visualization \and Permutation graph  \and Cocomparability graph \and \B0 \and Graph drawing.}
\end{abstract}
\section{Introduction}
Representing a graph as an intersection graph of two-dimensional geometric
objects like strings, line segments, rectangles and disks is a means to depict
a graph on the plane. When the graph being represented is a comparability or
cocomparability graph, one can also ask whether the ``direction'' of the
comparability relation in the associated poset can also be inferred from the
drawing. In this paper we characterize cocomparability graphs which can be
represented as intersection graphs of vertical and horizontal line segments in
a plane. For the posets whose cocomparability graphs can be represented thus,
we describe a representation from which one can also infer the direction of the
comparability relation. Our drawing algorithm runs in polynomial time.

\B{k} graphs are intersection graphs of simple paths with at most $k$ bends
on a two-dimensional grid.
Here, a path is simple if it does not pass through any grid vertex twice,
and two paths are said to intersect if they share a vertex of the grid.
The name \B{k} is an abbreviation for Vertex-intersection graphs of
$k$-Bend Paths on a Grid.
In particular, \B0 graphs are intersection graphs of vertical and
horizontal line segments on a plane. 
The \emph{bend number} of a graph $G$ is the minimum $k$ for which $G$
belongs to \B{k}. 

The \emph{dimension} of a poset $P = (X, \prec)$ is the smallest $k$ such
that $\prec$ is the intersection of $k$ total orders on $X$.
The \emph{comparability graph of $P$} is the undirected graph on the vertex
set $X$ with edges between the pairs of elements comparable in $P$. 
A graph $G$ is a \emph{comparability graph} if it is the comparability
graph of a poset.
If two posets have the same comparability graph, then they have the same
dimension \cite{trotter1976dimension}.
Hence we can unambiguously define the \emph{dimension of a comparability graph} $G$ as the dimension of any poset $P$ whose comparability graph is
$G$. 
The complement of a comparability graph is a \emph{cocomparability} graph.
A \emph{permutation graph} is a comparability graph of dimension at most two.
It is known that a graph $G$ is a permutation graph if and only if $G$
is both comparability and cocomparability \cite{pnueli1971transitive}.

Cohen et al. \cite{cohen2016posets} illustrated, via an elegant
picture-proof, that if $G$ is a comparability graph of dimension $k$ ($k
\geq 1$), the bend number of its complement $\comp{G}$ is at most $k-1$.
In particular therefore, the bend number of a permutation graph is either
$0$ or $1$.
They posed the problem of characterizing permutation graphs with bend
number $0$ as an open question (Qn $4.2$ in \cite{cohen2016posets}).
We settle this question with a stronger result. 
We characterize cocomparability graphs with bend number $0$ as follows
(Theorem~\ref{theorem:vpg}). 

The simple cycle on $k$ vertices is denoted by $C_k$. 
A $C_4$ together with an additional edge $e$ between two non-consecutive
vertices of the $C_4$ is a \emph{diamond} and the edge $e$ is a
\emph{diamond diagonal}.
Two vertices $x$ and $y$ in a graph $G$ are \emph{diamond related} if there
exists a path from $x$ to $y$ in $G$ made up of diamond diagonals alone.
This is easily verified to be an equivalence relation that refines the
connectivity relation in $G$.

\begin{theorem}
	\label{theorem:vpg}
	A cocomparability graph $G$ is \B0 if and only if 
	\begin{enumerate}[label=(\roman*)]
		\item No two vertices of an induced $C_4$ in $G$ are diamond
		related, and 
		\item $G$ does not contain an induced subgraph isomorphic to
		$\comp{C_6}$, the complement of $C_6$.
	\end{enumerate}
\end{theorem}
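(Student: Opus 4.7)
The plan splits into necessity of (i) and (ii), and the more substantial sufficiency.

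\smallskip\noindent\textbf{Necessity.} The key geometric lemma I would establish is: in any \B{0} representation, if $v_1 v_2 v_3 v_4$ is an induced $C_4$ in cyclic order, then the two diagonal pairs $\{v_1,v_3\}$ and $\{v_2,v_4\}$ have perpendicular orientations, and the two segments of each pair are parallel but lie on \emph{distinct} parallel lines. I would prove this by a short case check on the sixteen orientation assignments to the four segments, using that two parallel segments intersect only if they are collinear and overlapping, while one horizontal and one vertical segment meet in at most one point. A direct strengthening says: if the diagonal pair $(v_1,v_3)$ itself carries an edge (a diamond diagonal), the two parallel segments are forced onto the same line. Iterating this strengthening along a diamond-diagonal path yields that all diamond-related vertices are collinear in any \B{0} representation; since the four vertices of an induced $C_4$ lie on four distinct lines, no two of them can be diamond related---which is (i). For (ii), I would label $\comp{C_6}$ as the triangular prism with triangles $\{a_1,a_2,a_3\}$, $\{b_1,b_2,b_3\}$ and matching $a_ib_i$; the three sets $\{a_i,a_j,b_j,b_i\}$ are induced $C_4$'s. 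The perpendicular-diagonals rule applied to the first two, after fixing $a_1$ horizontal, forces $b_2,b_3$ horizontal and $a_2,a_3,b_1$ vertical; the third $C_4$ then requires $a_2$ and $b_3$ to be parallel, but one is vertical and the other horizontal---contradiction.

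\smallskip\noindent\textbf{Sufficiency.} Given a cocomparability graph $G$ satisfying (i) and (ii), I would fix any poset $P$ with cocomparability graph $G$ and assemble a \B{0} drawing in three stages. Stage 1 partitions $V(G)$ into its diamond-equivalence classes $\mathcal{C}_1,\ldots,\mathcal{C}_m$ and defines a conflict graph $H$ on these classes, with $\mathcal{C}_i \sim \mathcal{C}_j$ whenever some induced $C_4$ of $G$ places representatives of each as diagonally opposite vertices; condition (i) ensures that the perpendicularity constraint lifts consistently from vertices to classes. Stage 2 2-colours $H$ with colours \emph{horizontal} and \emph{vertical}, giving each class a common orientation. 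Stage 3 lays all segments of a class on a single axis-parallel line in its assigned direction, with positions along and across the lines derived from a linear extension of $P$, so arranged that two segments cross exactly when their corresponding elements are incomparable in $P$.

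\smallskip\noindent\textbf{Main obstacle.} The crux is Stage 2, proving that $H$ is bipartite. I expect to argue contrapositively: an odd cycle in $H$ should produce either a diamond-related pair inside a common $C_4$ (forbidden by (i)) or, in the base case of a triangle in $H$, an induced $\comp{C_6}$ (forbidden by (ii))---essentially inverting the three-$C_4$ argument used for necessity of (ii). Stage 3 is the other delicate piece: placing endpoints via a linear extension so that the intersection graph is exactly $G$ is where the cocomparability hypothesis must be used in full, since (i) and (ii) alone cannot rule out exotic patterns for arbitrary graphs; here I would exploit that in a linear extension the incomparable pairs form the edge set of $G$ and use this to synchronise the intervals along each line with crossings of the perpendicular segments.
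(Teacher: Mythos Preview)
Your necessity argument is correct and matches the paper's: the induced $C_4$ forces distinct parallel lines for each diagonal pair, the diamond forces collinearity of its diagonal endpoints, and transitivity of collinearity yields (i); the three-$C_4$ orientation propagation in $\comp{C_6}$ yields (ii).

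The sufficiency sketch has a genuine gap, visible already on $G=K_3$. This graph is cocomparability and satisfies (i) and (ii) vacuously; each vertex is its own diamond class, and since there is no induced $C_4$ your conflict graph $H$ has no edges. Your Stage~3 then puts $a,b,c$ on three distinct lines, but any \B0 drawing of a triangle must place two of the three segments on the \emph{same} line---so ``one line per diamond class'' cannot work. The point is that an edge of $G$ lying in no induced $C_4$ creates no edge in $H$, yet still forces collinearity whenever its endpoints get the same orientation. The paper repairs exactly this by contracting \emph{beyond} the diamond classes: from the dd-minor it keeps contracting edges as long as no branch set swallows two vertices of an induced $C_4$, stopping only when the quotient $R_G$ is minimal. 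Minimality guarantees that every remaining edge of $R_G$ lies on an induced $C_4$ of $R_G$, and it is precisely this that makes the triangle-freeness (hence bipartiteness) argument go through; the $\comp{C_6}$-freeness of $G$ is invoked only at that point. Your ``triangle in $H$ yields $\comp{C_6}$'' intuition is on the right track, but without the extra contractions the three witnessing $C_4$'s need not interlock into a $\comp{C_6}$.

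Two smaller issues. First, as written, an edge of your $H$ records that two classes share a diagonal pair of a $C_4$ and hence need the \emph{same} orientation; a proper $2$-colouring of that graph does the opposite of what you want---you presumably meant ``adjacent in some induced $C_4$''. Second, Stage~3 is where most of the work lies and a linear extension alone is not enough: the paper shows that each branch set together with its outside neighbours induces an interval graph, chooses an interval representation compatible with the poset $P$, and then exploits that $R_G$ is a bipartite \emph{permutation} graph (not merely bipartite) to lay these interval representations on a grid so that the crossing points of perpendicular families coincide correctly.
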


A poset $P = (X, \prec)$ is an \emph{interval order} if all the elements of
$X$ can be mapped to intervals on $\mathbb{R}$ such that $\forall x,y \in X$, $x
\prec y$ if and only if the interval representing $x$ is disjoint from
and to the left of the interval representing $y$. 
Complements of the comparability graphs of interval orders form the well
known class of \emph{interval graphs}.
While interval graphs are trivially \B0, it is known that there exists
interval orders of arbitrarily high dimension \cite{bogart1976bound}.
Hence the class of \B0 cocomparability graphs is richer than the class 
of \B0 permutation graphs. 
In fact, since permutation graphs are $\comp{C_6}$-free, the first of the
two conditions in Theorem~\ref{theorem:vpg} characterizes \B0 permutation
graphs.

\begin{corollary}
	\label{cor:perm}
	A permutation graph $G$ is \B0 if and only if no two vertices of an
	induced $C_4$ in $G$ are diamond related.
\end{corollary}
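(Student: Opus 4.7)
The plan is to deduce Corollary~\ref{cor:perm} as an immediate specialization of Theorem~\ref{theorem:vpg}. Since every permutation graph is a cocomparability graph, Theorem~\ref{theorem:vpg} applies to $G$ directly, and all that remains is to verify that condition (ii) of the theorem is automatic for permutation graphs, i.e., that no permutation graph contains an induced $\comp{C_6}$. Once this is done, Theorem~\ref{theorem:vpg} collapses to exactly the statement of the corollary.

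To establish $\comp{C_6}$-freeness, I would use two standard closure properties of the class of permutation graphs: it is closed under induced subgraphs, and, since a graph is a permutation graph iff it is both a comparability graph and a cocomparability graph, it is closed under complementation as well. Combining these, if some permutation graph $G$ had an induced $\comp{C_6}$, then $\comp{G}$ would be a permutation graph with $C_6$ as an induced subgraph, and consequently $C_6$ itself would be a permutation graph. I would then derive a contradiction from the classical fact that $C_6$ is not a permutation graph: viewed as the Hasse diagram of a height-two poset, it is precisely the standard example $S_3$, which has order dimension $3$. Equivalently, $\comp{C_6}$ is the triangular prism, which admits no transitive orientation, so it is not a comparability graph. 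Either route rules $\comp{C_6}$ out.

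The only step with any mathematical content is the dimension lower bound for $C_6$, and this is classical and may be invoked by citation rather than rederived. There is no serious obstacle; the corollary is essentially one observation layered on top of Theorem~\ref{theorem:vpg}.
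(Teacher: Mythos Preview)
Your proposal is correct and is exactly the paper's approach: the paper deduces the corollary from Theorem~\ref{theorem:vpg} together with the single observation that permutation graphs are $\comp{C_6}$-free, which you justify via the standard fact that $C_6$ is the comparability graph of the three-dimensional standard example $S_3$. The paper states this $\comp{C_6}$-freeness without proof, so your write-up is, if anything, slightly more detailed than the original.
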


A naive check for the conditions in Theorem~\ref{theorem:vpg} can be done
in $O(n^6)$ time. 
Combining this with any of the known polynomial time recognition algorithms
for cocomparability graphs \cite{golumbic1977complexity,corneil1999lbfs} will give a polynomial time recognition algorithm for \B0 cocomparability graphs.
We do not try to optimize the recognition algorithm here, but only note
that this is in contrast to the NP-completeness of recognizing \B0 graphs.

The above algorithm starts by fixing a partial order $P_G$ whose cocomparability
graph is $G$ and a linear extension $\sigma$ of $P_G$. The resulting drawing $D$
ends up being a representation of $P_G = (V(G), \prec_P)$ in the following sense.
We define three binary relations $\prec_D^{v,h}$, $\prec_D^{h,v}$ and $\prec_D$
among vertices of $G$ based on the drawing $D$ as follows. 

\begin{definition}
	\label{defnDiagramOrder}
	Let $x$ and $y$ be two vertices in $V(G)$ and let $I_x$ and $I_y$ be the line
	segments representing them in $D$, respectively. 
	\begin{itemize}
		\item 
		$x \p{v}{h} y$ if  $I_x$ is either vertically below $I_y$ or if both are
		intersected by a horizontal line, $I_x$ is to the left of $I_y$.	
		\item
		$x \p{h}{v} y$ if $I_x$ is either to the left of $I_y$ or if both are
		intersected by a vertical line, $I_x$ is vertically below $I_y$. 
		\item
		$x \prec_D y$ if and only if $x \p{v}{h} y$ when $I_x$ is horizontal and
		$x \p{h}{v} y$ when $I_x$ is vertical.
	\end{itemize}
\end{definition}

While the above relations are not even partial orders in general, in our
drawing $D$, the relation $\prec_D$ faithfully captures $\prec_P$.
Theorem~\ref{theorem:2D} states this formally and Figure~\ref{fig:vpgDrawingEg}
illustrates an example.

\begin{theorem}
	\label{theorem:2D}
	Any poset $P_G = (V_G,\prec_P)$ whose cocomparability graph $G$ is \B0 has a two dimensional visualization $D$ such that $x \prec_P y$ if and only if $x \prec_D y$.
\end{theorem}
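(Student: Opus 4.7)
My plan is to revisit the B$_0$-VPG construction behind the sufficiency direction of Theorem~\ref{theorem:vpg}, note that it is already parameterized by the given poset $P_G$, and extract an invariant strong enough to ensure $\prec_D\,=\,\prec_P$. I first fix a linear extension $\sigma = v_1,\ldots,v_n$ of $P_G$. Since $G$ is the cocomparability graph of $P_G$, $\sigma$ is a \emph{cocomparability ordering} of $V(G)$: for every $i<j<k$ with $v_iv_k\in E(G)$, the middle vertex $v_j$ is adjacent in $G$ to $v_i$ or $v_k$. The drawing $D$ is built incrementally along $\sigma$, with each $v_t$ receiving an orientation (horizontal or vertical) and a location; the argument is by induction on $t$.

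The invariant I would maintain at every step is a monotone staircase condition: (a) the $y$-coordinates of the horizontal segments already placed are strictly increasing in their $\sigma$-order, and the $x$-coordinates of the vertical segments are strictly increasing in their $\sigma$-order; (b) for every pair $v_s, v_t$ with $s<t$, $v_s\prec_P v_t$, and perpendicular orientations, $I_{v_s}$ sits relative to $I_{v_t}$ in exactly the configuration that Definition~\ref{defnDiagramOrder} requires to certify $v_s\prec_D v_t$. Preserving (a) is easy: at step $t$, pick the new primary coordinate (the $y$-coordinate if horizontal, the $x$-coordinate if vertical) to strictly exceed those of all earlier same-orientation segments. The real content of the induction is (b), for which the freedom in choosing the perpendicular extent of $I_{v_t}$, freedom already used in the proof of Theorem~\ref{theorem:vpg} to realize the required adjacencies of $v_t$, must be shown to simultaneously produce the required staircase position with respect to every previously placed perpendicular segment comparable to $v_t$ in $P_G$.

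Once the invariant holds for the whole drawing, the equivalence $\prec_P\,=\,\prec_D$ follows by a short case analysis. If $x\prec_P y$, then $x$ precedes $y$ in $\sigma$ and $xy\notin E(G)$, so $I_x\cap I_y=\emptyset$; clause (a) immediately handles the two same-orientation cases, and clause (b) handles the two mixed cases. Conversely, if $x\prec_D y$, Definition~\ref{defnDiagramOrder} itself forces $I_x\cap I_y=\emptyset$, hence $xy\notin E(G)$ and $x,y$ are comparable in $P_G$; a symmetric application of the invariant to the pair $(y,x)$ excludes $y\prec_P x$, leaving $x\prec_P y$.

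The hardest step is establishing clause (b). For a horizontal $I_x$ and a vertical $I_y$ with $x\prec_P y$, one must place them so that either $I_x$ lies vertically below $I_y$ or their $y$-ranges overlap with $I_x$ to the left of $I_y$; for a vertical $I_x$ and a horizontal $I_y$ with $x\prec_P y$, the symmetric lower-left arrangement is needed. The obstruction is that adjacency demands (each segment must also cross its $G$-neighbors) may conflict with this staircase discipline unless $G$ avoids $\comp{C_6}$ and also avoids two diamond-related vertices in any induced $C_4$. I expect the argument here to reuse the classification of vertices by orientation type from the proof of Theorem~\ref{theorem:vpg}, showing that a forbidden substructure would force a pair of perpendicular segments into the wrong staircase cell, and hence that the algorithm, guided by $\sigma$, produces precisely the placement demanded by Definition~\ref{defnDiagramOrder}.
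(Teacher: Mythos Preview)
Your proposal mischaracterizes the construction underlying Theorem~\ref{theorem:vpg}. The drawing $D$ is not built incrementally along $\sigma$; it is assembled from the reduced dd-minor $R_G$ by laying the interval representation $\mathcal{I}_i$ of each branch set $B_i$ along a single grid line ($x=i$ for odd $i$, $y=i$ for even $i$). Consequently your invariant~(a) is simply false for this drawing: all vertical segments belonging to a given odd branch set share the same $x$-coordinate, and all horizontal segments belonging to a given even branch set share the same $y$-coordinate. Two comparable vertices $x\prec_P y$ lying in the same branch set are separated not by different primary coordinates but by the left-to-right order of their intervals inside $\mathcal{I}_i$ (Lemma~\ref{lemma:interval_R_G}\ref{interval1}). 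If instead you mean to propose a \emph{new} incremental construction with strictly increasing primary coordinates, you then owe a complete proof that the resulting drawing is a valid \B0 representation of $G$---that is the full content of the sufficiency direction of Theorem~\ref{theorem:vpg}---and your clause~(b), the only place the forbidden-substructure hypotheses would enter, is left as an ``expectation'' rather than an argument.

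The paper's actual proof uses the branch-set decomposition directly. For $x\prec_P y$ it runs a case analysis on whether $x$ and $y$ lie in the same branch set, in distinct branch sets of the same parity, or in branch sets of opposite parity (adjacent or not in $R_G$); in each case the required geometric relation is read off from Lemma~\ref{lemma:interval_R_G}, Lemma~\ref{lemma:branchContinuity}, and forbidden-triple arguments via Lemma~\ref{lemma:forbiddentriple_contradiction}. Antisymmetry of $\prec_D$ is then established by a separate geometric observation (no segment of $D$ lies to the bottom-right of a segment of opposite orientation), again proved through forbidden triples. Your final paragraph on antisymmetry is in the right spirit, but the forward implication needs to pass through the branch-set structure of $R_G$, not through a vertex-by-vertex staircase invariant that the construction does not maintain.
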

\begin{figure}
	\centering 
	\begin{subfigure}[b]{0.44\linewidth}
		
		\centering
		\begin{tikzpicture}[scale=.7]
		\node (a) at (0,0) {\textbf{\color{forestgreen}a}};
		\node [right of=a, node distance=.7in] (b) {\textbf{\color{darkblue}b}};
		\node [above of=a, node distance=.5in] (d) {\textbf{\color{forestgreen}d}};
		\node [above of=b, node distance=.5in] (e) {\textbf{\color{darkblue}e}};
		\node [left of=d, node distance=.5in] (c) {\textbf{\color{darkblue}c}};
		\node [right of=e, node distance=.5in] (f) {\textbf{\color{forestgreen}f}};
		
		\node [above of=e, node distance=.5in] (h) {\textbf{\color{forestgreen}h}};
		\node [left of=h, node distance=.7in] (g) {\textbf{\color{darkblue}g}};
		\node [right of=h, node distance=.6in] (i) {\textbf{\color{forestgreen}i}};
		
		\draw [darkseagreen, thick, shorten <= -2pt, shorten >=-2pt] (a) -- (c);
		\draw [darkseagreen, thick, shorten <= -2pt, shorten >=-2pt] (a) -- (d);
		\draw [darkseagreen, thick, shorten <= -2pt, shorten >=-2pt] (a) -- (f);
		\draw [blue, thick, shorten <= -2pt, shorten >=-2pt] (b) -- (c);
		\draw [blue, thick, shorten <= -2pt, shorten >=-2pt] (b) -- (e);
		\draw [blue, thick, shorten <= -2pt, shorten >=-2pt] (b) -- (f);
		
		\draw [blue, thick, shorten <= -2pt, shorten >=-2pt] (c) -- (g);
		\draw [blue, thick, shorten <= -2pt, shorten >=-2pt] (c) -- (h);
		
		\draw [darkseagreen, thick, shorten <= -2pt, shorten >=-2pt] (d) -- (g);
		\draw [darkseagreen, thick, shorten <= -2pt, shorten >=-2pt] (d) -- (h);
		\draw [darkseagreen, thick, shorten <= -2pt, shorten >=-2pt] (d) -- (i);
		
		\draw [blue, thick, shorten <= -2pt, shorten >=-2pt] (e) -- (g);
		\draw [blue, thick, shorten <= -2pt, shorten >=-2pt] (e) -- (h);
		\draw [blue, thick, shorten <= -2pt, shorten >=-2pt] (e) -- (i);
		
		\draw [darkseagreen, thick, shorten <= -2pt, shorten >=-2pt] (f) -- (h);
		\draw [darkseagreen, thick, shorten <= -2pt, shorten >=-2pt] (f) -- (i);
		\end{tikzpicture}
	\end{subfigure}
	\begin{subfigure}[b]{0.55
			\linewidth}
		\centering
		\begin{tikzpicture}[scale=.7]		
		
		\draw [darkblue, ultra thick, shorten <= -2pt, shorten >=-2pt] (2.4*\n,1.2*\n) -- (2.4*\n,3.6*\n);
		\node (pe) at (2.5*\n,1.1*\n) {\tiny $I_b$};
		\draw [darkblue, ultra thick, shorten <= -2pt, shorten >=-2pt] (4.1*\n,1.2*\n) -- (4.1*\n,3.6*\n);
		\node (pe) at (4.2*\n,1.1*\n) {\tiny $I_e$};
		\draw [darkblue, ultra thick, shorten <= -2pt, shorten >=-2pt] (4*\n,2.8*\n) -- (4*\n,5.2*\n);
		\node (pg) at (4.15*\n,5.2*\n) {\tiny $I_c$};
		\draw [darkblue, ultra thick, shorten <= -2pt, shorten >=-2pt] (5.6*\n,2.8*\n) -- (5.6*\n,5.2*\n);
		\node (pd) at (5.7*\n,2.7*\n) {\tiny $I_g$};
		\draw [forestgreen, ultra thick, shorten <= -2pt, shorten >=-2pt] (2*\n,1.6*\n) -- (4.4*\n,1.6*\n);
		\node (pd) at (1.9*\n,1.5*\n) {\tiny $I_a$};
		\draw [forestgreen, ultra thick, shorten <= -2pt, shorten >=-2pt] (2*\n,3.3*\n) -- (4.4*\n,3.3*\n);
		\node (pf) at (1.9*\n,3.2*\n) {\tiny $I_d$};
		\draw [forestgreen, ultra thick, shorten <= -2pt, shorten >=-2pt] (3.6*\n,3.2*\n) -- (6*\n,3.2*\n);
		\node (pd) at (6.1*\n,3.3*\n) {\tiny $I_f$};
		\draw [forestgreen, ultra thick, shorten <= -2pt, shorten >=-2pt] (3.6*\n,4.8*\n) -- (6.3*\n,4.8*\n);
		\node (pb) at (6.4*\n,4.7*\n) {\tiny $I_i$};
		\draw [forestgreen, ultra thick, shorten <= -2pt, shorten >=-2pt] (5.2*\n,4.9*\n) -- (6.3*\n,4.9*\n);
		\node (pb) at (6.4*\n,5*\n) {\tiny $I_h$};
		
		\draw [->][blue, thin]   (2.4*\n,3.2*\n) -- (3.54*\n,3.2*\n); 
		\draw [->][blue, thin]   (2.4*\n,3*\n) -- (3.95*\n,3*\n); 
		\draw [->][blue, thin]   (2.4*\n,2.9*\n) -- (4.1*\n,2.9*\n); 
		
		\draw [->][darkseagreen, thin]   (4*\n,1.6*\n) -- (4*\n,2.7*\n); 
		\draw [->][darkseagreen, thin]   (3.7*\n,1.6*\n) -- (3.7*\n,3.2*\n); 
		\draw [->][darkseagreen, thin]   (3.8*\n,1.6*\n) -- (3.8*\n,3.3*\n); 
		
		\draw [->][blue, thin]   (4*\n,4.9*\n) -- (5.1*\n,4.9*\n); 
		\draw [->][blue, thin]   (4*\n,5*\n) -- (5.6*\n,5*\n); 
		
		\draw [->][darkseagreen, thin]   (4.45*\n,3.3*\n) -- (5.6*\n,3.3*\n); 
		\draw [->][darkseagreen, thin]   (4.3*\n,3.3*\n) -- (4.3*\n,4.75*\n); 
		\draw [darkseagreen, thin]   (4.4*\n,3.3*\n) -- (4.4*\n,4.7*\n); 
		\draw [darkseagreen, thin]   (4.4*\n,4.7*\n) -- (5.4*\n,4.7*\n); 
		\draw [->][darkseagreen, thin]   (5.4*\n,4.7*\n) -- (5.4*\n,4.9*\n); 
		
		\draw [->][blue, thin]   (4.1*\n,3.1*\n) -- (5.6*\n,3.1*\n); 
		\draw [blue, thin]   (4.1*\n,3*\n) -- (6.2*\n,3*\n); 
		\draw [->][blue, thin]   (6.2*\n,3*\n) -- (6.2*\n,4.8*\n); 
		\draw [blue, thin]   (4.1*\n,2.9*\n) -- (6.3*\n,2.9*\n); 
		\draw [->][blue, thin]   (6.3*\n,2.9*\n) -- (6.3*\n,4.9*\n); 
		
		\draw [->][darkseagreen, thin]   (5.8*\n,3.2*\n) -- (5.8*\n,4.8*\n); 
		\draw [->][darkseagreen, thin]   (5.9*\n,3.2*\n) -- (5.9*\n,4.9*\n); 
		
		\end{tikzpicture}
	\end{subfigure}
	\caption{Hasse diagram of a poset corresponding to a \B0 cocomparability graph and a \B0 representation $D$ in which the covering relation is indicated by thin directed paths for clarity. The directed paths with blue color and green color respectively depict the relations \p{h}{v} and \p{v}{h}.}\label{fig:vpgDrawingEg}
\end{figure}
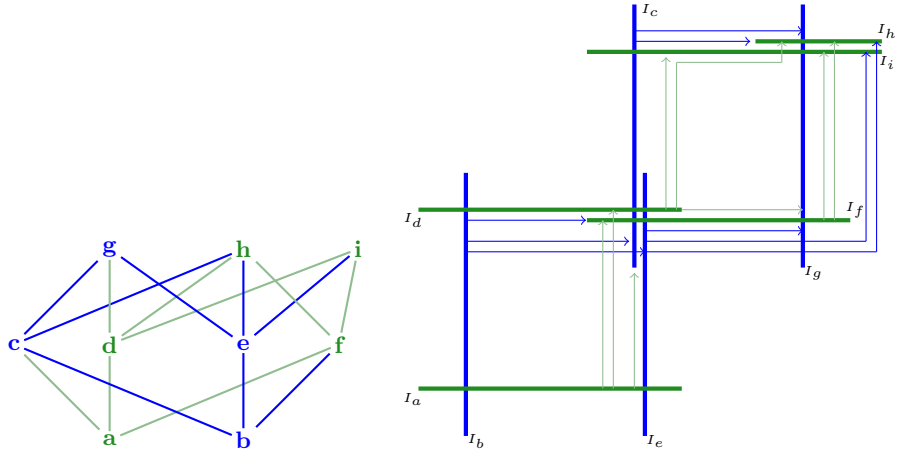

\subsection{Literature}
\B{k} graphs were introduced by Asinowski et al.\ in 2012
\cite{asinowski2012vertex} as a parameterized generalization for string
graphs (intersection graphs of curves in a plane) and grid intersection
graphs (bipartite graphs which are intersection graphs of vertical and
horizontal segments in the plane in which all the vertices in one part are
represented by vertical segments and all the vertices in the other part by
horizontal line segments).
Grid Intersection graphs (GIGs) are equivalent to bipartite \B0 graphs.
Similarly one can show that \B{k} graphs with an unrestricted $k$, which
are called VPG graphs simpliciter, are equivalent to string graphs.
\B0 graphs are also equivalent to $2$-DIR graphs, where a $k$-DIR graph
is an intersection graph of line segments lying in at most $k$ directions
in the plane. All these equivalences were formally established in
\cite{asinowski2012vertex}.

The NP-completeness of the recognition problem for VPG graphs follows from
that of string graphs \cite{kratochvil1991string,schaefer2003recognizing}. 
For \B0 graphs, it follows from that of $2$-DIR graphs
\cite{kratochvil1994intersection}.
Chaplick et al.\  showed that, $\forall k \geq 0$, it is NP-complete to
recognize whether a given graph $G$ is in \B{k} even when $G$ is
guaranteed to be in \B{k+1} and represented as such \cite{chaplick2012bend}.
This also shows that $\forall k \geq 0$ the classes \B{k} and \B{k+1} are
separated.
Cohen et al.\ showed that, $\forall k \geq 0$, there exists a
cocomparability graph with bend number $k$ (Theorem~3.1 in
\cite{cohen2016posets}). 
This shows that, $\forall k \geq 0$, the classes \B{k} and \B{k+1} are
separated within cocomparability graphs.
The question of a similar separation within chordal graphs was left
open in \cite{chaplick2012bend} and a partial answer was given in
\cite{chakraborty2019bounds}.

Since the \B{k} representation is a kind of planar representation, the bend
number of planar graphs have received special attention. 
Chaplick and Ueckerdt showed, disproving a conjecture in
\cite{asinowski2012vertex}, that every planar graph is \B2 \cite{chaplick2012planar}.
Every planar bipartite graph is a GIG \cite{hartman1991grid} and
hence \B0. 
The order dimension of GIGs has also been investigated in literature \cite{chaplick2018grid}.
A polynomial time decision algorithm for chordal \B0 graphs is developed in \cite{chaplick2011recognizing}.
Characterizations for \B0 are known within the classes of split
graphs, chordal bull-free graphs, chordal claw-free graphs
\cite{golumbic2013intersectionForDiamond} and block graphs
\cite{alcon2017vertex}.

Subclasses of cocomparability graphs within which a characterization
for \B0 is known include cographs, bipartite permutation graphs,
and interval graphs. 
Cographs, which form a subgraph of permutation graphs are \B0 if and
only if they do not contain an induced $W_4$
\cite{cohen2014characterizations}.  
A $W_4$ is a  $C_4$ together with 	a universal fifth vertex.
All bipartite permutation graphs are \B0 \cite{cohen2016posets}.
Interval graphs are trivially \B0, since the interval representation
itself is a \B0 representation.
Theorem~\ref{theorem:vpg} subsumes these three results.


Towards the end of this paper, we discuss a two dimensional visualization of posets. The most common way to visualize a poset $P = (X, \prec)$ so far is \emph{Hasse Diagram} (also called Order Diagram). 
The problem of drawing a Hasse diagram algorithmically was addressed by many algorithms \emph{e.g.} \emph{upward planar drawing} \cite{di1988algorithms}, \emph{dominance drawing} \cite{di1992area}, \emph{confluent drawing} \cite{dickerson2005confluent}, \emph{weak dominance drawing} \cite{kornaropoulos2012weak}. The key concern here is to get a crossing-free drawing in which no two upward edges cross at a non vertex point. 
The first three algorithms can only handle posets of dimension at most two and a few other cases.
Though our drawing handles only those posets whose cocomparability graph is \B0 irrespective of its dimension, crossing-freeness is not a concern in our drawing since comparability is inferred from the relative position of lines. 
\subsection{Terminology and Notation}

The complement of a graph $G$ is denoted as $\overline{G}$.
We denote a path and a cycle on $n$ vertices, respectively, by $P_n$ and
$C_n$.
A graph $G$ is said to be \emph{$H$-free} if $G$ contains no induced
subgraph isomorphic to the graph $H$.
A poset $P$ is said to be \emph{$T$-free} if $P$ contains no induced 
subposet isomorphic to the poset $T$. 
In this case, $P$ is also said to \emph{exclude} $T$.

The \emph{closed neighborhood} of a vertex $v$ is the set of neighbors of
$v$ together with $v$.
An \emph{Asteroidal Triple} (\emph{AT}) is a set of three independent
vertices such that there exists a path between each two of them not
passing through any vertex from the closed neighborhood of the third.
We have defined interval orders, interval, comparability, cocomparability,
and permutation graphs in the introduction. 
We will make use of the facts that $C_4$-free cocomparability graphs are
interval graphs \cite{gilmore1964characterization} and all cocomparability graphs are AT-free	\cite{golumbic1984tolerance}.

\section{Proof of Theorem~\ref{theorem:vpg}}

\label{sectionProof}

The necessity of the two conditions in Theorem~\ref{theorem:vpg} is
relatively easier to establish, and hence we do that first.
A $C_4$ has a unique B$_0$-VPG representation as shown in
Figure~\ref{fig:C4andDiamond}(a) \cite{asinowski2012vertex}.
Notice that no two vertices of an	induced $C_4$ can be represented by
collinear paths in a \B0 representation.
In contrast, one can see that in any \B0 representation of a $C_3$, at
least two of its three vertices have to be represented by collinear paths.  
Moreover, in any B$_0$-VPG representation of a diamond, the endpoints of
the diamond diagonal have to be represented by collinear paths as shown in
Figure \ref{fig:C4andDiamond}(b)
\cite{golumbic2013intersectionForDiamond}.  
Since collinearity is transitive, any two vertices which are diamond
related have to be represented by collinear paths.  
This shows the necessity of the the first condition in Theorem
\ref{theorem:vpg}. 
Notice that $\overline{C_6}$ has a $C_3$ in which each pair of vertices is
part of an induced $C_4$.  
Being part of $C_3$ forces two of the corresponding three paths to be
collinear which prevents a B$_0$-VPG representation of the corresponding
induced $C_4$. 
Hence the necessity of the second condition.  
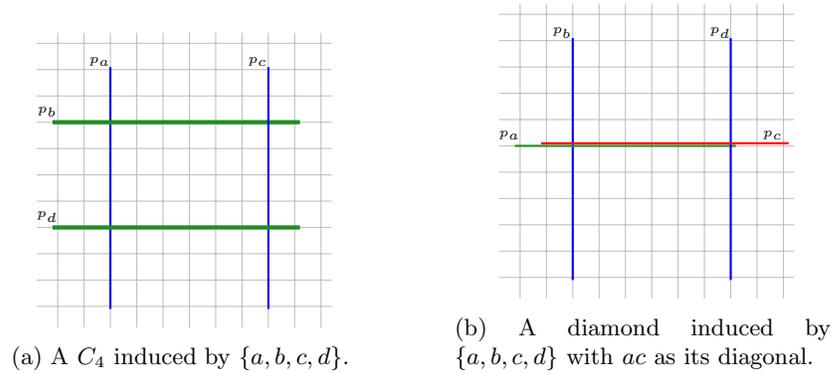
\begin{figure}[h]
	\centering
	\begin{subfigure}[b]{0.58\linewidth}
		\centering
		\begin{tikzpicture}[scale=.7]		
		
		\draw[step=0.5,lightgray,thin] (0.1,0.1) grid (5.7,5.7);
		
		\node (p1) at (1.3,5.15) {\tiny $p_a$};
		\draw [blue, thick, shorten <= -2pt, shorten >=-2pt] (1.5,0.55) -- (1.5,4.95);					
		
		\node (p1) at (0.3,4.2) {\tiny $p_b$};
		\draw [forestgreen, ultra thick, shorten <= -2pt, shorten >=-2pt] (0.5,4) -- (5,4);
		
		\node (p1) at (4.3,5.15) {\tiny $p_c$};
		\draw [blue, thick, shorten <= -2pt, shorten >=-2pt] (4.5,0.55) -- (4.5,4.95);
		
		\node (p1) at (0.3,2.2) {\tiny $p_d$};
		\draw [forestgreen, ultra thick, shorten <= -2pt, shorten >=-2pt] (0.5,2) -- (5,2);			
		
		\end{tikzpicture}
		\caption{A $C_4$ induced by $\{a,b,c,d\}$.}
	\end{subfigure}	
	\begin{subfigure}[b]{0.41\linewidth}
		\centering
		\begin{tikzpicture}[scale=.7]		
		
		\draw[step=0.5,lightgray,thin] (0.1,0.1) grid (5.7,5.7);
		
		\node (p1) at (1.3,5.15) {\tiny $p_b$};
		\draw [blue, thick, shorten <= -2pt, shorten >=-2pt] (1.5,0.55) -- (1.5,4.95);					
		
		\node (p1) at (0.3,3.2) {\tiny $p_a$};
		\draw [forestgreen, thick, shorten <= -2pt, shorten >=-2pt] (0.5,3) -- (4.5,3);
		
		\node (p1) at (4.3,5.15) {\tiny $p_d$};
		\draw [blue, thick, shorten <= -2pt, shorten >=-2pt] (4.5,0.55) -- (4.5,4.95);
		
		\node (p1) at (5.3,3.2) {\tiny $p_c$};
		\draw [red, thick, shorten <= -2pt, shorten >=-2pt] (1,3.05) -- (5.5,3.05);			
		
		\end{tikzpicture}
		\caption{A diamond induced by $\{a,b,c,d\}$ with $ac$ as its diagonal.}
	\end{subfigure}
	\caption{The unique B$_0$-VPG representation of $C_4$ and a diamond.}
	\label{fig:C4andDiamond}
\end{figure}

The three step algorithm (Algorithm \ref{algo:\B0drawing}) and the proof of its correctness (Appendix \ref{appAlgo}) are devoted to showing that these two necessary
conditions are sufficient to construct a \B0 representation of a
cocomparability graph.
The construction is completed in three steps.
We start with a cocomparability graph $G$ satisfying the two conditions of
Theorem \ref{theorem:vpg}. 
In the first step, we contract a subset of edges of $G$ to obtain a
bipartite minor $R_G$ of $G$ with a couple of additional properties.
A set of vertices in G which gets represented by a single vertex in $R_G$ after all the edge contractions is referred to as a \emph{branch set} of $R_G$. We will denote the vertices in $R_G$ by the corresponding branch sets.
In the second step, for each of the subgraphs of $G$ induced by each branch
set of $R_G$, we find an interval representation, again with a few
additional properties.
In the third and final step, we fit all the above interval representations
together to get a \B0 representation of $G$.

Before proceeding to the algorithm, we state in the next section some of the known results which ease our construction.

\subsection{Preliminaries}

An ordering $\sigma$ of $V$ of a graph $G(V,E)$ is called a
\emph{cocomparability ordering} or an \emph{umbrella-free ordering} if for all
three vertices in $x <_\sigma y <_\sigma z$, adjacency of $x$ and $z$
implies that at least one of the other pairs are adjacent. If not,
$(x,y,z)$ is called an \emph{umbrella} in $\sigma$. 

\begin{lemma}[\cite{kratsch1993domination}]
	\label{lemma:cocompImplies}
	A graph $G$ is a cocomparability graph if and only if there is a cocomparability ordering $\sigma$ of the vertices of $G$.
\end{lemma}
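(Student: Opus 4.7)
\medskip

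\noindent\textbf{Proof plan.}
The plan is to expose a tight correspondence between cocomparability orderings of $G$ and transitive orientations of its complement $\comp{G}$, and then prove each direction of the equivalence by constructing one from the other.

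For the forward direction, I would start from a poset $P=(V,\prec)$ whose comparability graph is $\comp{G}$ (this is exactly what being a cocomparability graph of $G$ means). I would then take any linear extension $\sigma$ of $P$ and verify that $\sigma$ is umbrella-free for $G$. To do so, I would assume for contradiction that some triple $x<_\sigma y<_\sigma z$ forms an umbrella, i.e.\ $xz\in E(G)$ but $xy,yz\notin E(G)$. Then $xy$ and $yz$ are both edges of $\comp{G}$, so both are oriented in $P$; since $\sigma$ extends $P$, the orientations must be $x\prec y$ and $y\prec z$. Transitivity of $P$ would force $x\prec z$, contradicting $xz\in E(G)$ (equivalently $xz\notin E(\comp{G})$).

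For the backward direction, given an umbrella-free ordering $\sigma$ of $G$, I would orient every edge of $\comp{G}$ from its $\sigma$-earlier endpoint to its $\sigma$-later endpoint, and claim that this is a transitive orientation. Suppose $x\to y$ and $y\to z$ are two oriented edges of $\comp{G}$ sharing the vertex $y$; by construction $x<_\sigma y<_\sigma z$, and $xy,yz\in E(\comp{G})$, i.e.\ $xy,yz\notin E(G)$. If $xz$ were in $E(G)$, then $(x,y,z)$ would be an umbrella in $\sigma$, contradicting the hypothesis. Hence $xz\in E(\comp{G})$ and is oriented $x\to z$, which is exactly transitivity. This shows $\comp{G}$ is a comparability graph, so $G$ is cocomparability.

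The crux in each direction is the same simple observation: an umbrella at $(x,y,z)$ in $G$ is precisely a $P_2$ with vertex sequence $x,y,z$ in $\comp{G}$ whose two edges, when oriented left-to-right in $\sigma$, fail transitivity. There is no real obstacle beyond carefully packaging this equivalence; the only thing to check with care is that a linear extension of $P$ respects the direction of every comparable pair, which is used to pin down the orientations of $xy$ and $yz$ in the forward direction.
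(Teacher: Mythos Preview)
Your proof is correct and is the standard argument for this well-known characterization. Note, however, that the paper does not prove this lemma at all: it is stated with a citation to \cite{kratsch1993domination} and used as a black box, so there is no ``paper's own proof'' to compare against. Your write-up is exactly the argument one would expect from that reference, namely the bijection between linear extensions of a transitive orientation of $\comp{G}$ and umbrella-free orderings of $G$.
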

%

\begin{definition}
	Given a graph $G$ and a total ordering $\sigma$ of $V(G)$, a triple $(u,v,w)$ of vertices of $G$ where $u \prec_\sigma v \prec_\sigma w$ is called a \emph{forbidden triple} if there exists a path from $u$ to $w$ without containing a vertex from the closed neighborhood of $v$.
\end{definition}

\begin{lemma}
	\label{lemma:forbiddentriple_contradiction}
	Any umbrella free ordering is forbidden triple free.
\end{lemma}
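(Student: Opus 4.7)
The plan is to proceed by contradiction. Suppose $\sigma$ is an umbrella-free ordering but contains a forbidden triple $(u,v,w)$ with $u \prec_\sigma v \prec_\sigma w$. By definition, there exists a path $P = x_0 x_1 \cdots x_k$ in $G$ with $x_0 = u$ and $x_k = w$ such that every $x_i$ lies outside the closed neighborhood $N[v]$; in particular, no $x_i$ equals $v$ and no $x_i$ is adjacent to $v$.

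The next step is to locate an edge of $P$ that ``straddles'' $v$ in $\sigma$. Since $\sigma$ is a total order and $v \notin V(P)$, each vertex $x_i$ satisfies either $x_i \prec_\sigma v$ or $x_i \succ_\sigma v$. The endpoint $x_0 = u$ falls on the left of $v$ and $x_k = w$ on the right, so as we walk along $P$ the side changes at least once. Thus there is an index $i$ for which $x_i$ and $x_{i+1}$ lie on opposite sides of $v$; without loss of generality $x_i \prec_\sigma v \prec_\sigma x_{i+1}$.

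Now I would apply the umbrella-free hypothesis to the ordered triple $(x_i, v, x_{i+1})$. The edge $x_i x_{i+1}$ belongs to $G$, so umbrella-freeness forces at least one of $x_i v$ or $v x_{i+1}$ to be an edge of $G$. Either conclusion places $x_i$ or $x_{i+1}$ in $N[v]$, which directly contradicts the assumption that $P$ avoids $N[v]$.

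The argument is short and the only place requiring genuine care is the crossing step: one needs the totality of $\sigma$ and the fact that $v$ is not on $P$ (guaranteed by $v \in N[v]$) to assert that every vertex of $P$ sits strictly on one side of $v$, so that a ``first'' edge crossing $v$ actually exists. Beyond that, no machinery is needed; the lemma is essentially a one-line consequence of the definitions, so I do not expect any real obstacle.
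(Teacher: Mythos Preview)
Your proof is correct and follows essentially the same approach as the paper: both argue by contradiction, take a path from $u$ to $w$ avoiding $N[v]$, locate an edge of the path whose endpoints lie on opposite sides of $v$ in $\sigma$, and observe that this edge forms an umbrella with $v$. Your write-up is in fact slightly more explicit than the paper's about why such a straddling edge must exist.
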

\begin{proof}
	Let $\sigma$ be an umbrella free ordering.
	Assume a forbidden triple $u \prec_\sigma v \prec_\sigma w$ exists. 
	Thus there exists a path from $u$ to $w$ without containing a vertex from the closed neighborhood of $v$. If we arrange the vertices of the path together with $v$ in an order respecting $\sigma$, there exists two adjacent vertices  $u_1$ and $w_1$ among them such that $ u_1 \prec_\sigma v \prec_\sigma w_1$. Thus $(u_1, v, w_1)$ forms an umbrella in $\sigma$ which is a contradiction.
\end{proof}

	\begin{lemma}[\cite{chejnovska2015optimisation}]
		\label{lemma:cocompEdgeContraction}
		Cocomparability is preserved under edge contraction.
	\end{lemma}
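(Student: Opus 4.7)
The plan is to derive the result directly from the umbrella-free ordering characterization of cocomparability graphs supplied by Lemma~\ref{lemma:cocompImplies}. Fix any cocomparability ordering $\sigma$ of $G$ and an edge $uv \in E(G)$, where without loss of generality $u \prec_\sigma v$. Let $w$ denote the vertex of $G/uv$ obtained by identifying $u$ and $v$, and define a candidate ordering $\sigma'$ of $V(G/uv)$ by deleting $u$ from $\sigma$ and relabelling $v$ as $w$. It suffices to show that $\sigma'$ is umbrella-free, which by Lemma~\ref{lemma:cocompImplies} gives that $G/uv$ is a cocomparability graph.

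Consider any triple $a \prec_{\sigma'} b \prec_{\sigma'} c$ with $ac \in E(G/uv)$. If $w \notin \{a,b,c\}$, then the induced subgraph on $\{a,b,c\}$ is identical in $G$ and $G/uv$, so the umbrella-freeness of $\sigma$ on $G$ carries over verbatim. Otherwise exactly one of $a,b,c$ is $w$. The common trick in each of these cases is to translate the edge incident to $w$ back to an edge incident to $u$ or $v$ in $G$ (using that $xw \in E(G/uv)$ iff $xu \in E(G)$ or $xv \in E(G)$), apply the umbrella-free property of $\sigma$ to a suitably chosen triple in $G$, and fold the conclusion back into $G/uv$.

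The main subtlety is the case $b = w$: one has $a \prec_\sigma v \prec_\sigma c$ and must split on whether $a \prec_\sigma u$ or $u \prec_\sigma a$. In the first subcase the triple $a \prec_\sigma u \prec_\sigma c$ in $\sigma$, together with $ac \in E(G)$, yields $au \in E(G)$ or $uc \in E(G)$; in the second subcase the triple $a \prec_\sigma v \prec_\sigma c$ yields $av \in E(G)$ or $vc \in E(G)$; either outcome gives $aw \in E(G/uv)$ or $wc \in E(G/uv)$ as required. The cases $a = w$ and $c = w$ are analogous and marginally easier, with the only extra step being, when the edge $ac \in E(G/uv)$ is witnessed by $au \in E(G)$ rather than $av \in E(G)$, choosing the triple anchored at $u$ (rather than $v$) inside $\sigma$. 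The bookkeeping in these subcases is the only real obstacle; once it is dispatched, $\sigma'$ is verified to be umbrella-free and Lemma~\ref{lemma:cocompImplies} closes the argument.
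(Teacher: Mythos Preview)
The paper does not supply its own proof of this lemma; it is simply quoted from \cite{chejnovska2015optimisation}. Your direct argument via the umbrella-free ordering characterization (Lemma~\ref{lemma:cocompImplies}) is the natural one and is essentially correct. A cosmetic difference: when the paper later applies this lemma it places the contracted vertex at the position of the \emph{earlier} endpoint in $\sigma$, whereas you place it at the later one; either convention works, but the bookkeeping shifts accordingly.

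There is one small gap in your case analysis. In the case $c = w$ you have $a \prec_\sigma b \prec_\sigma v$, but the position of $u$ relative to $b$ is not determined (you only know $u \prec_\sigma v$). If the edge $aw \in E(G/uv)$ is witnessed only by $au \in E(G)$ and it happens that $u \prec_\sigma b$, then the triple $(a,b,u)$ is not correctly ordered in $\sigma$, so ``choosing the triple anchored at $u$'' does not apply as stated. The easy fix is to exploit the contracted edge itself: since then $u \prec_\sigma b \prec_\sigma v$ and $uv \in E(G)$, umbrella-freeness of $\sigma$ gives $ub \in E(G)$ or $bv \in E(G)$, hence $bw \in E(G/uv)$. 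With this extra sub-case handled, the proof is complete.
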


A $2+2$ is a poset containing four elements where every element is comparable with exactly one element.
A $2+2$ poset corresponds to an induced $C_4$ in the complement of its comparability graph. 
Thus an interval order cannot contain a $2+2$. Similarly, if a poset does not have 
a $2+2$, then the complement of all of its comparability graphs will be $C_4$-free cocomparability graphs.

\begin{theorem}[Fishburn-1970] \emph{[\cite{fishburn1970intransitive}, Theorem 6.29 in \cite{trotter2016applied}]}
	\label{theorem:fishburn}
	A poset is an interval order if and only if it excludes $2+2$.
\end{theorem}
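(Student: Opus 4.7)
The plan is to handle the two implications separately. For the easy direction, suppose toward a contradiction that $P$ has an interval representation $x \mapsto I_x = [l_x, r_x]$ but contains a $2+2$ subposet $\{a, b, c, d\}$ with chains $a \prec b$ and $c \prec d$ and all four cross-pairs incomparable. The comparabilities force $r_a < l_b$ and $r_c < l_d$; the incomparabilities $a \parallel d$ and $b \parallel c$ give $I_a \cap I_d \neq \emptyset$ and $I_b \cap I_c \neq \emptyset$, hence $l_d \leq r_a$ and $l_b \leq r_c$. Chaining these inequalities yields $l_d \leq r_a < l_b \leq r_c < l_d$, a contradiction.

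For the harder direction, the centerpiece is a structural lemma: in a $2+2$-free poset the strict down-sets $D(x) := \{z : z \prec x\}$ form a chain under inclusion. I would argue by contradiction: if $D(x)$ and $D(y)$ were inclusion-incomparable, pick witnesses $a \in D(x) \setminus D(y)$ and $b \in D(y) \setminus D(x)$; a short case analysis using $a \prec x$, $b \prec y$, $a \not\prec y$, $b \not\prec x$ rules out every comparability among the four cross-pairs (each alternative would immediately contradict one of the four facts by transitivity), so $\{a, b, x, y\}$ is a $2+2$. By a dual argument, the strict up-sets $U(x) := \{z : x \prec z\}$ form a chain under reverse inclusion.

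With these chains in hand, I would construct the representation by letting $d(x)$ denote the rank of $D(x)$ in the inclusion chain and setting $l(x) := d(x)$ together with $r(x) := \min\{d(z) : z \in U(x)\} - \tfrac{1}{2}$, with $r(x)$ chosen larger than all $d(\cdot)$ when $x$ is maximal. The forward implication $x \prec y \Rightarrow r(x) < l(y)$ is immediate from the definition. The main obstacle is the converse: showing that if $x, y$ are distinct and incomparable then $I_x \cap I_y \neq \emptyset$. Assuming without loss of generality $d(x) \leq d(y)$, this reduces to proving $D(y) \subsetneq D(z)$ for every $z \in U(x)$. Strictness is immediate since $x \in D(z) \setminus D(y)$, while the inclusion $D(y) \subseteq D(z)$ is precisely where $2+2$-freeness re-enters: for any $w \prec y$, the two chains $w \prec y$ and $x \prec z$ must induce some comparability among their cross-pairs, and eliminating $z \prec w$, $w \parallel z$ with $w \parallel x$, etc., by the usual transitivity arguments leaves $w \prec z$ as the only possibility. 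I would conclude by checking that $x \mapsto I_x$ thus realises $P$ as an interval order.
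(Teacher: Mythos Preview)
Your proof is correct and is essentially the standard argument for Fishburn's theorem. Note, however, that the paper does not prove this result at all: it is quoted as a classical theorem with a citation to Fishburn (1970) and to Trotter--Keller, and is used as a black box in the proof of Lemma~\ref{lemma:interval_R_G}. So there is no ``paper's own proof'' to compare against.

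One small streamlining of your argument: in the converse step you re-invoke $2+2$-freeness to show $D(y) \subseteq D(z)$ for every $z \in U(x)$, but you have already established that the down-sets form a chain under inclusion. Since $x \in D(z) \setminus D(y)$, the chain property immediately forces $D(y) \subsetneq D(z)$, and hence $d(y) < d(z)$, without the extra case analysis on $\{w,x,y,z\}$. Your direct $2+2$ argument also works (every comparability among the cross-pairs other than $x \parallel y$ either yields $w \prec z$ by transitivity or forces $x \prec y$), but the shortcut via the chain lemma is cleaner.
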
 

Consider a bipartite graph $G(A\cup B, E)$. An ordering $\sigma$ of $A$ is said to have \emph{adjacency property} if the neighborhood of every vertex of $B$ is consecutive in $\sigma$. 
Here $G$ is called \emph{convex} if there exists an ordering $\sigma$ of $A$ with the adjacency property and \emph{biconvex} if it is convex and there exists an ordering $\tau$ of $B$ with the adjacency property.
Bipartite permutation graphs are biconvex graphs \cite{spinrad1987bipartite}.
\begin{theorem}[\cite{cohen2016posets}] \label{thm:bipartiteVPG}
	Bipartite permutation graphs are \B0.
\end{theorem}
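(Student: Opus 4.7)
The plan is to leverage Spinrad's theorem (cited just above the statement) that every bipartite permutation graph $G(A \cup B, E)$ is biconvex. Fixing a pair of biconvex orderings $\sigma = (a_1, \ldots, a_m)$ of $A$ and $\tau = (b_1, \ldots, b_n)$ of $B$, I would directly write down a \B0 representation on the integer grid that uses $\sigma$ to place horizontal segments and $\tau$ to place vertical segments.

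Concretely, for each $a_i \in A$ with $N(a_i) \neq \emptyset$, let $x_1(i) = \min\{j : b_j \in N(a_i)\}$ and $x_2(i) = \max\{j : b_j \in N(a_i)\}$, and represent $a_i$ by the horizontal segment at height $y = i$ spanning $x \in [x_1(i), x_2(i)]$. Symmetrically, for each $b_j \in B$ with $N(b_j) \neq \emptyset$, let $y_1(j) = \min\{i : a_i \in N(b_j)\}$ and $y_2(j) = \max\{i : a_i \in N(b_j)\}$, and represent $b_j$ by the vertical segment at $x = j$ spanning $y \in [y_1(j), y_2(j)]$. Isolated vertices are placed as unit segments in a region of the grid disjoint from everything else.

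The verification then runs cleanly in both directions. If $a_i b_j \in E$, then by the adjacency property of $\tau$ we have $j \in [x_1(i), x_2(i)]$, and by the adjacency property of $\sigma$ we have $i \in [y_1(j), y_2(j)]$; hence the two segments share the grid point $(j, i)$. Conversely, if they share any grid point, that point must be $(j, i)$, which forces $a_i \in N(b_j)$ by the definition of $y_1(j), y_2(j)$. This establishes that the representation is a valid \B0 representation of $G$.

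There is really no obstacle here beyond correctly invoking biconvexity; the only minor care-point is handling vertices of degree zero, which is done by parking them away from the drawing. The same construction will also be useful later, since it realizes $G$ with a clean bipartition between the horizontal and vertical segments and so plugs in directly as a building block for the branch-set drawings in Algorithm~\ref{algo:\B0drawing}.
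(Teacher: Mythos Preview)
Your construction is correct and is essentially the standard proof. The paper does not actually prove this theorem; it is stated as a preliminary result with a citation to \cite{cohen2016posets}, so there is no in-paper argument to compare against.

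One small cosmetic point: in your forward direction you attribute $j \in [x_1(i), x_2(i)]$ to ``the adjacency property of $\tau$,'' but that containment is immediate from the definitions of $x_1(i)$ and $x_2(i)$ as $\min$ and $\max$. The adjacency property is what you actually need (and implicitly use) in the \emph{converse}: from $i \in [y_1(j), y_2(j)]$ you conclude $a_i \in N(b_j)$, which holds precisely because $N(b_j)$ is an interval in $\sigma$. Swapping those justifications would make the write-up cleaner. Your remark that this bipartite drawing feeds into Algorithm~\ref{algo:\B0drawing} is apt: the algorithm places odd-indexed branch sets vertically and even-indexed ones horizontally in exactly this grid pattern, with each branch set's interval representation laid along the corresponding line.
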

\subsection{\B0 Algorithm} \label{subsectAlgo}

	We see a three-step algorithm to construct a \B0 representation for any arbitrary cocomparability graph satisfying the conditions of Theorem \ref{theorem:vpg}. 
	Figure \ref{fig:vpgDrawing} helps to understand the algorithm easily.
	The first step is depicted in Figure \ref{vpgDD}, \ref{vpgReducedDD}, \ref{vpgRelabeling} and
	the final drawing in the third step is shown in Figure \ref{vpgD}.	
		
	In the following definition, we assume that any self loop produced by an
edge contraction is removed and any parallel edges formed by an edge
contraction is represented by a single edge in the minor.

\begin{definition}[dd-minor]
	A \emph{dd-minor} of graph $G$ is the graph obtained by contracting every
	diamond diagonal in $G$. 
\end{definition}

\begin{definition}[Reduced dd-minor]
	A \emph{reduced dd-minor} of graph $G$ is a minimal graph $R_G$ that can be
	obtained by edge contractions of the dd-minor of $G$ such that no
	branch-set of $R_G$ contains more than one vertex of an induced $C_4$ in
	$G$.
\end{definition}
\begin{remark}
	Though the dd-minor exists for every graph, a reduced dd-minor does not
	exist for every graph. A necessary and sufficient condition for the
	existence of a reduced dd-minor for a graph $G$ is that no two vertices
	of an induced $C_4$ in $G$ should be diamond related.
\end{remark}
If an edge $xy$ ($x \prec_\sigma y$) is contracted to a new vertex, then the new vertex is placed at the position of $x$ in $\sigma$ and labeled as $x$ itself.
This results in a new order $\sigma^\prime$ which is a subsequence of $\sigma$.
By Lemma \ref{lemma:cocompEdgeContraction}, $\sigma^\prime$ is an umbrella-free ordering.
Thus after all the edge contractions to get the minimal graph $R_G$, we get an umbrella-free ordering $\sigma_{R_G}$ of $V(R_G)$ which is a subsequence of $\sigma$. 
This is sufficient to say that $R_G$ is also a cocomparability graph by Lemma \ref{lemma:cocompImplies}.
In fact, every vertex $B$ of $R_G$ is represented in $\sigma_{R_G}$ by the leftmost (under $\sigma$) vertex $b$ in the branch set $B$. 

For any two branch sets $B_i$ and $B_j$ of $R_G$, $B_{j,i}$ denotes the vertices in $B_j$ which have a neighbor in $B_i$.

\begin{lemma}[Proof in Appendix \ref{appAlgo}]
	\label{lemma:R_G}
	The following claims on the cocomparability graph $R_G$ are true.
	\begin{enumerate}
		\item \label{claimClique} For any two adjacent branch sets $B_1$ and $B_2$, the set
		$B_{1,2} \cup B_{2,1}$ induces a clique in $G$.
		\item \label{claimConsecutive} \noindent If $B_0$, $B_1$, $B_2$ form consecutive vertices of a
		$C_3$ or an induced $C_4$ in $R_G$ then $B_{1,0} \cap B_{1,2} \neq \emptyset$. Moreover, if $B_0, \ldots, B_{k-1}$ is a $C_3$ or an induced $C_4$ in $R_G$, then there exists an induced cycle $b_0, \ldots, b_{k-1}$ in $G$  where each $b_i \in B_i$.
		\item \label{claimBipartite} $R_G$ is a bipartite permutation graph.
	\end{enumerate} 
\end{lemma}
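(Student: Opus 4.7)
The plan is to prove the three claims in sequence, exploiting three structural ingredients: the umbrella-free ordering $\sigma_{R_G}$ of $R_G$ inherited from an umbrella-free ordering $\sigma$ of $G$ (combining Lemmas~\ref{lemma:cocompImplies} and~\ref{lemma:cocompEdgeContraction}); the defining constraint of the reduced dd-minor, namely that no branch set contains two vertices of an induced $C_4$ of $G$; and the two hypotheses of Theorem~\ref{theorem:vpg}. A repeatedly useful fact is that any two diamond-related vertices of $G$ already lie in the same branch set of $R_G$, since the dd-minor collapses each diamond-equivalence class into a single vertex; in particular, any diamond diagonal discovered in $G$ between two purportedly distinct branch sets is an immediate contradiction.

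For Claim~1, I would argue by contradiction. Given non-adjacent $a, b \in B_{1,2} \cup B_{2,1}$, I would look at the witnesses on the opposite side (or at witnesses on the same side, if $a,b$ happen to lie in the same branch set) together with a shortest connecting path inside the relevant branch set. The goal is to exhibit either an induced $C_4$ of $G$ with two vertices inside a single branch set (contradicting the reduced dd-minor property) or a diamond whose diagonal links two different branch sets (contradicting the observation above). When $a \in B_{1,2}$ and $b \in B_{2,1}$, the quadruple $\{a, a', b, b'\}$ with $aa', bb' \in E$ handles the case by a direct adjacency analysis. When $a,b$ lie in the same branch set, their connecting path in $G[B_1]$ may be long, and I expect the technical crux to be reducing, via a shortest-path choice and the umbrella-free property, to a length-two configuration where the $C_4$/diamond dichotomy closes the argument.

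For Claim~2's first assertion, suppose $B_{1,0} \cap B_{1,2} = \emptyset$ for a consecutive triple $B_0 B_1 B_2$ in a $C_3$ or induced $C_4$ of $R_G$. Then distinct witnesses $u \in B_{1,0}$ and $u' \in B_{1,2}$ exist in $B_1$; combined with their external neighbors in $B_0$ and $B_2$ (and, in the $C_3$ case, a witness edge between $B_0$ and $B_2$), one produces either a forbidden triple under $\sigma$ (impossible by Lemma~\ref{lemma:forbiddentriple_contradiction}) or a new induced $C_4$ of $G$ with two vertices of $B_1$. The moreover statement then follows by applying the first assertion at every branch set around the cycle to pick a common witness $b_i$; Claim~1 forces adjacency of consecutive $b_i$s in $G$, while the non-adjacency of non-consecutive branch sets of $R_G$ rules out chords in $G$, so the chosen representatives induce the desired cycle of length $k$.

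For Claim~3, cocomparability of $R_G$ is immediate from Lemma~\ref{lemma:cocompEdgeContraction}; since every bipartite graph is a comparability graph and a permutation graph is precisely one that is both comparability and cocomparability, it suffices to show that $R_G$ is bipartite. Induced cycles $C_k$ of odd length $k \geq 5$ cannot appear in a cocomparability graph (by the self-complementarity of $C_5$ together with AT-freeness of cocomparability graphs, which kills $C_k$ for odd $k \geq 7$), so the remaining task is to exclude triangles. For a triangle $B_0 B_1 B_2$ in $R_G$, Claim~2 lifts it to an induced triangle $b_0 b_1 b_2$ in $G$; the minimality of $R_G$ forces, for each pair $\{B_i, B_j\}$, the existence of an induced $C_4$ of $G$ with one vertex in $B_i$ and one in $B_j$ (otherwise the corresponding edge of $R_G$ could be contracted without violating the $C_4$ constraint, contradicting minimality). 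Combining these three $C_4$s with the triangle $b_0 b_1 b_2$ and using Claim~1 to track the pairwise cliques $B_{i,j} \cup B_{j,i}$, I expect to exhibit an induced $\comp{C_6}$ in $G$, contradicting hypothesis~(ii) of Theorem~\ref{theorem:vpg}. The principal obstacle throughout is Claim~1's same-branch-set subcase, after which Claims~2 and~3 are essentially bookkeeping on top of established structural tools.
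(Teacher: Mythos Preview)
Your plan for Claim~1 is essentially the paper's: the same $C_4$/diamond dichotomy, the same shortest-path reduction inside a branch set, and the same contradiction via a diamond diagonal crossing branch sets. For Claim~2 the paper does not use forbidden triples; instead it shows once and for all that every induced $C_n$ in $R_G$ lifts to an induced $C_m$ in $G$ with $m \geq n$ (via shortest paths inside each $B_i$ when $B_{i,i-1}\cap B_{i,i+1}=\emptyset$), and then observes that $m>n$ is impossible because $G$ is cocomparability (no induced $C_{\geq 5}$) and because $m=4$ with two vertices in one branch set violates the dd-minor constraint. Your forbidden-triple alternative is not developed enough to see why it would fire, and for the $C_4$ case in $R_G$ it is not clear what the triple would be; the cycle-lifting route is both simpler and what actually closes the argument.

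The genuine gap is in Claim~3. You anticipate that the three $C_4$'s attached to the triangle combine directly into an induced $\comp{C_6}$, but the paper's proof shows this is not automatic. Two intermediate facts are needed first: (a) for a triangle $B_0B_1B_2$ in $R_G$ one has the \emph{equality} $B_{i,i-1}=B_{i,i+1}$, not merely nonempty intersection; and (b) every induced $C_4$ of $G$ lifts to an induced $C_4$ of $R_G$ (so the three $C_4$'s witnessed by minimality live in $R_G$, each sharing an edge with the triangle). One then passes to a minimal subgraph $R_M$ of $R_G$ carrying the triangle and the three surrounding $C_4$'s, and performs a four-way case analysis on whether the two ``outer'' branch sets attached at each corner coincide. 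Only in the case where all three pairs coincide does one obtain a second triangle in $R_G$ and hence an induced $\comp{C_6}$ in $G$; the other cases yield either an asteroidal triple or an induced $C_5$ in $R_G$, each already contradicting cocomparability. Your sketch does not account for these other outcomes, and without (a) and (b) the six vertices you would pick need not induce a prism.
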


	\subsubsection{Relabeling of $V(R_G)$}\label{relabeling}
It is clear from Lemma \ref{lemma:R_G}.\ref{claimBipartite} that the reduced dd-minor $R_G$ of the cocomparability graph $G$ is a bipartite permutation graph.  
Given the umbrella-free ordering $\sigma$ of $G$, we inherited the umbrella-free ordering $\sigma_{R_G}$ for $R_G$ which respects $\sigma$.
In the algorithm, we label each branch set of the left part of $R_G$ with $B_1,B_3, \ldots$ (odd indices) such that $i<j$ implies that in the order $\sigma$, the leftmost vertex in $B_i$ is to the left of the leftmost vertex in $B_j$. Similarly we label each branch set of the right part of $R_G$ with $B_0,B_2, \ldots$ (even indices) such that $i<j$ implies that in the order $\sigma$, the leftmost vertex in $B_i$ is to the left of the leftmost vertex in $B_j$.

Henceforth, we slightly abuse the notation $\prec_\sigma$ for the branch sets of the reduced dd-minor of $G$ in the following way. For any two such branch sets $B_i$ and $B_j$, $B_i \prec_\sigma B_j$ if $\forall x \in B_i, \forall y \in B_j$, $x \prec_\sigma y$.
Thus $B_i$ and $B_j$ are said to be \emph{separated in $\sigma$} if either $B_i \prec_\sigma B_j$ or $B_j \prec_\sigma B_i$.

\begin{lemma}[Proof in Appendix \ref{appAlgo}]
	\label{lemma:branchContinuity}
	For any two branch sets $B_i,B_j$ of the same parity if $i<j$, then $B_i \prec_\sigma B_j$.
\end{lemma}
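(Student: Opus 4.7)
I would argue by contradiction against the umbrella-freeness of $\sigma$. Suppose $B_i$ and $B_j$ share a parity with $i<j$, yet $B_i \not\prec_\sigma B_j$; then by the branch-set definition of $\prec_\sigma$ there exist $x\in B_i$ and $y\in B_j$ with $y\prec_\sigma x$. Let $b_i$ and $b_j$ be the leftmost vertices (under $\sigma$) of $B_i$ and $B_j$. The relabeling convention of Section~\ref{relabeling} for same-parity branch sets gives $b_i\prec_\sigma b_j$, while $b_j\preceq_\sigma y$ by minimality of $b_j$. Chaining these yields the key inequality $b_i\prec_\sigma y\prec_\sigma x$ with $b_i,x\in B_i$ and $y\in B_j$.

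Because $B_i$ is a branch set of the minor $R_G$, it induces a connected subgraph of $G$, so I can fix a $G$-path $b_i=v_0,v_1,\ldots,v_k=x$ with every $v_\ell\in B_i$. Since $v_0\prec_\sigma y\prec_\sigma v_k$ and no $v_\ell$ can equal $y$ (distinct branch sets), I take the smallest index $\ell$ for which $v_\ell\succ_\sigma y$; then $v_{\ell-1}\prec_\sigma y\prec_\sigma v_\ell$ and $v_{\ell-1}v_\ell$ is an edge of $G$.

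To promote this crossing edge into an umbrella I need $y$ to be non-adjacent in $G$ to both $v_{\ell-1}$ and $v_\ell$. This is where Lemma~\ref{lemma:R_G}.\ref{claimBipartite} enters decisively: $B_i$ and $B_j$ share a parity, so in the bipartite graph $R_G$ they lie on the same side of the bipartition and are non-adjacent there. By the definition of a minor this precludes any $G$-edge between a vertex of $B_i$ and a vertex of $B_j$; in particular $y$ is non-adjacent to $v_{\ell-1}$ and $v_\ell$. Hence $(v_{\ell-1},y,v_\ell)$ is an umbrella in $\sigma$, contradicting Lemma~\ref{lemma:cocompImplies}.

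The step I expect to require the most care is bookkeeping rather than conceptual: I must disentangle the original umbrella-free ordering $\sigma$ on $V(G)$ from its abuse as a relation on branch sets of $R_G$, and be explicit that the umbrella exhibited lives inside $G$, so that the violation is of umbrella-freeness of $\sigma$ itself rather than of the inherited $\sigma_{R_G}$.
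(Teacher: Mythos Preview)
Your proof is correct and follows essentially the same line as the paper's: both fix a path in $G[B_i]$ from the leftmost vertex of $B_i$ to $x$ and use the non-adjacency of $B_i$ and $B_j$ in $R_G$ (from bipartiteness, same parity) to conclude that $y$ has no neighbour on this path. The only cosmetic difference is that the paper packages the path-crossing step as an appeal to Lemma~\ref{lemma:forbiddentriple_contradiction} (the forbidden-triple lemma), whereas you unroll that lemma inline and exhibit the umbrella $(v_{\ell-1},y,v_\ell)$ directly.
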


	\begin{lemma}[Proof in Appendix \ref{appAlgo}]
	\label{lemma:interval_R_G}
	For each branch set $B_i$ of $R_G$, $G[B_i^*]$ is an interval graph.
	Moreover, $G[B_i^*]$ has an interval representation $\mathcal{I}_i^*$
	satisfying	the following properties.
	\begin{enumerate}[label=(\roman*)]
		\item \label{interval1}
		For all $x, y \in B_i^*$, we have the interval for $x$ to the left
		of interval for $y$ if and only if $x \prec_P y$.
		\item \label{interval2}
		For each neighbor $B_j$ of $B_i$, all the intervals corresponding
		to the vertices in $B_{j,i}$ are point intervals at a point
		$p_{j,i}$.
		\item \label{interval3}
		If $B_j$ and $B_k$ are two neighbors of $B_i$ such that $j<k$, then
		the point $p_{j,i}$ is to the left of the point $p_{k,i}$ in
		$\mathcal{I}_i^*$. 
	\end{enumerate} 
\end{lemma}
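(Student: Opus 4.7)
The plan is to show that the restriction $P|_{B_i^*}$ of $P_G$ to $B_i^*$ is an interval order whose canonical Fishburn representation already witnesses property (i), and then to modify that representation by collapsing each $B_{j,i}$ to a single point to obtain (ii) and (iii).

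For Fishburn's theorem to apply, I need $P|_{B_i^*}$ to be $2+2$-free, equivalently $G[B_i^*]$ to be $C_4$-free. The relevant structural facts are: (a) each $B_{j,i}$ induces a clique in $G$ by Lemma \ref{lemma:R_G}.\ref{claimClique}; (b) distinct neighbors $B_j, B_k$ of $B_i$ lie in the same bipartition class of $R_G$ (Lemma \ref{lemma:R_G}.\ref{claimBipartite}) and are therefore non-adjacent in $R_G$, so $B_{j,i}$ and $B_{k,i}$ are disjoint and mutually non-adjacent in $G$ when $j \ne k$. Any induced $C_4$ in $G[B_i^*]$ is also induced in $G$, and by the reduced-dd-minor property its four vertices must lie in four distinct branch sets of $R_G$. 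A short case analysis on how these four vertices distribute between $B_i$ and the $B_{j,i}$'s forces a contradiction: if two of them lie in a common branch set (i.e.\ in $B_i$ or in a single $B_{j,i}$) the distinctness property is violated, while if at most one lies in $B_i$ then the remaining outside vertices sit in distinct $B_{j,i}$'s, are pairwise non-adjacent, and cannot supply the four edges of a $C_4$. Hence $P|_{B_i^*}$ is an interval order by Theorem \ref{theorem:fishburn}, and Fishburn's construction yields an interval representation $\mathcal{I}$ in which $I_x$ lies strictly to the left of $I_y$ iff $x \prec_P y$; this gives property (i).

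To obtain (ii), for each neighbor $B_j$ of $B_i$ I would use that $B_{i,j} \cup B_{j,i}$ is a clique (Lemma \ref{lemma:R_G}.\ref{claimClique}); the corresponding intervals pairwise intersect, and by the Helly property on the line they share a common point, which I take as $p_{j,i}$. Replacing $I_v$ by $\{p_{j,i}\}$ for every $v \in B_{j,i}$ preserves all adjacencies: every $w \in B_{i,j}$ already contains $p_{j,i}$, and for every other $w \in B_i^*$ the non-edge $vw$ implies $v$ and $w$ are comparable in $P_G$, so $I_v$ and $I_w$ were already disjoint in $\mathcal{I}$, whence $p_{j,i} \notin I_w$. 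Finally, (iii) follows from Lemma \ref{lemma:branchContinuity}: if $j < k$ and both $B_j, B_k$ are neighbors of $B_i$, then $B_j \prec_\sigma B_k$; combined with the non-adjacency of $B_{j,i}$ and $B_{k,i}$ in $G$ and with $\sigma$ being a linear extension of $P_G$, this yields $v \prec_P u$ for every $v \in B_{j,i}$ and $u \in B_{k,i}$, placing $p_{j,i}$ strictly to the left of $p_{k,i}$ in $\mathcal{I}$. The main obstacle is the $C_4$-freeness case analysis; once that is in place, Fishburn's theorem and the Helly property handle the rest routinely.
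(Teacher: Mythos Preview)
Your proposal is correct and follows the same overall route as the paper: establish that $G[B_i^*]$ is $C_4$-free, apply Fishburn's theorem (Theorem~\ref{theorem:fishburn}) to the restricted poset for property~(i), use the clique structure of $B_{i,j}\cup B_{j,i}$ together with Helly to collapse each $B_{j,i}$ to a point for~(ii), and invoke Lemma~\ref{lemma:branchContinuity} for~(iii).

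The one place you diverge is the $C_4$-freeness argument. You argue directly: the four vertices of a hypothetical induced $C_4$ lie in four distinct branch sets by the reduced-dd-minor property, so at most one is in $B_i$, and the remaining $\geq 3$ sit in distinct $B_{j,i}$'s; since all neighbours of $B_i$ lie in the same part of the bipartite graph $R_G$, these $\geq 3$ vertices are pairwise non-adjacent, contradicting $\alpha(C_4)=2$. The paper instead invokes its sub-claim that every induced $C_4$ in $G$ lifts to an induced $C_4$ among the corresponding branch sets in $R_G$, and then observes that the closed neighbourhood of $B_i$ in the bipartite graph $R_G$ is a star and hence $C_4$-free. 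Your version is more self-contained here and avoids relying on that lifting sub-claim; the paper's version is a one-line corollary once the sub-claim is in hand (which it needs elsewhere anyway).
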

\begin{figure}
	
	\centering
	\begin{subfigure}[b]{0.5\linewidth}
		\centering
		\begin{tikzpicture}[scale=.7]
		\node (e) at (0,0) {e};
		\node [below of=e, node distance=.6in] (g) {g};
		\node [right of=e, node distance=.8in] (f) {f};
		\node [right of=g, node distance=.8in] (h) {h};
		\node [above of=e, node distance=.4in] (c) {c};
		\node [above of=f, node distance=.4in] (d) {d};
		\node [above of=c, node distance=.4in] (a) {a};
		\node [above of=d, node distance=.4in] (b) {b};
		\node [below right of=g, node distance=.6in] (j) {j};
		\node [above of=j, node distance=.65in] (i) {i};
		
		\draw [black, thick, shorten <= -2pt, shorten >=-2pt] (a) -- (b);
		\draw [black, thick, shorten <= -2pt, shorten >=-2pt] (a) -- (c);
		\draw [black, thick, shorten <= -2pt, shorten >=-2pt] (c) -- (d);
		\draw [black, thick, shorten <= -2pt, shorten >=-2pt] (b) -- (d);
		\draw [black, thick, shorten <= -2pt, shorten >=-2pt] (c) -- (e);
		\draw [black, thick, shorten <= -2pt, shorten >=-2pt] (e) -- (f);
		\draw [black, thick, shorten <= -2pt, shorten >=-2pt] (e) -- (d);
		\draw [black, thick, shorten <= -2pt, shorten >=-2pt] (e) -- (g);
		\draw [black, thick, shorten <= -2pt, shorten >=-2pt] (g) -- (h);
		\draw [black, thick, shorten <= -2pt, shorten >=-2pt] (f) -- (h);
		\draw [black, thick, shorten <= -2pt, shorten >=-2pt] (i) -- (g);
		\draw [black, thick, shorten <= -2pt, shorten >=-2pt] (i) -- (f);
		\draw [black, thick, shorten <= -2pt, shorten >=-2pt] (i) -- (j);
		\draw [black, thick, shorten <= -2pt, shorten >=-2pt] (g) -- (j);
		\draw [black, thick, shorten <= -2pt, shorten >=-2pt] (h) -- (j);
		\draw [black, thick, shorten <= -2pt, shorten >=-2pt] (i) -- (h);
		\draw [black, thick, shorten <= -2pt, shorten >=-2pt] (d) -- (f);
		\draw [black, thick, shorten <= -2pt, shorten >=-2pt] (c) -- (f);
		\end{tikzpicture}
		\caption{The \emph{cocomparability} graph $G$.}\label{vpgG}
	\end{subfigure}
	\begin{subfigure}[b]{0.48\linewidth}
		\centering
		\begin{tikzpicture}[scale=.7]
		\node (e) at (0,0) {e};
		\node [below of=e, node distance=.6in] (g) {g};
		\node [right of=e, node distance=.8in] (f) {f};
		\node [right of=g, node distance=.8in] (h) {h};
		\node [above of=e, node distance=.4in] (c) {c};
		\node [above of=f, node distance=.4in] (d) {d};
		\node [above of=c, node distance=.4in] (a) {a};
		\node [above of=d, node distance=.4in] (b) {b};
		\node [below right of=g, node distance=.6in] (j) {j};
		\node [above of=j, node distance=.65in] (i) {i};
		
		\draw [black, thick, shorten <= -2pt, shorten >=-2pt] (a) -- (b);
		\draw [black, thick, shorten <= -2pt, shorten >=-2pt] (a) -- (c);
		\draw [black, thick, shorten <= -2pt, shorten >=-2pt] (c) -- (d);
		\draw [black, thick, shorten <= -2pt, shorten >=-2pt] (b) -- (d);
		\draw [black, thick, shorten <= -2pt, shorten >=-2pt] (c) -- (e);
		\draw [black, thick, shorten <= -2pt, shorten >=-2pt] (e) -- (f);
		\draw [black, thick, shorten <= -2pt, shorten >=-2pt] (e) -- (d);
		\draw [black, thick, shorten <= -2pt, shorten >=-2pt] (e) -- (g);
		\draw [black, thick, shorten <= -2pt, shorten >=-2pt] (g) -- (h);
		\draw [black, thick, shorten <= -2pt, shorten >=-2pt] (f) -- (h);
		\draw [black, thick, shorten <= -2pt, shorten >=-2pt] (i) -- (g);
		\draw [black, thick, shorten <= -2pt, shorten >=-2pt] (i) -- (f);
		\draw [black, thick, shorten <= -2pt, shorten >=-2pt] (i) -- (j);
		\draw [black, thick, shorten <= -2pt, shorten >=-2pt] (g) -- (j);
		\draw [black, thick, shorten <= -2pt, shorten >=-2pt] (h) -- (j);
		\draw [black, thick, shorten <= -2pt, shorten >=-2pt] (i) -- (h);
		\draw [black, thick, shorten <= -2pt, shorten >=-2pt] (d) -- (f);
		\draw [black, thick, shorten <= -2pt, shorten >=-2pt] (c) -- (f);
		
		\draw[rotate=60] (-.45,-2.8) ellipse (.6cm and 1.5cm); 
		\draw (2.85,0) circle (0.4cm); 
		\draw (0,0) circle (0.4cm); 
		\draw (0,1.5) circle (0.4cm); 
		\draw (2.85,1.5) circle (0.4cm); 
		\draw (0,2.9) circle (0.4cm); 
		\draw (2.85,2.9) circle (0.4cm); 
		\draw (0,-2.1) circle (0.4cm); 
		\draw (1.5,-3.6) circle (0.4cm); 
		\end{tikzpicture}
		\caption{The \emph{dd-minor} of $G$.}\label{vpgDD}
		
	\end{subfigure}
	
	\begin{subfigure}[b]{0.5\linewidth}
		\centering
		\begin{tikzpicture}[scale=.7]
		\node (e) at (0,0) {e};
		\node [below of=e, node distance=.6in] (g) {g};
		\node [right of=e, node distance=.8in] (f) {f};
		\node [right of=g, node distance=.8in] (h) {h};
		\node [above of=e, node distance=.4in] (c) {c};
		\node [above of=f, node distance=.4in] (d) {d};
		\node [above of=c, node distance=.4in] (a) {a};
		\node [above of=d, node distance=.4in] (b) {b};
		\node [below right of=g, node distance=.6in] (j) {j};
		\node [above of=j, node distance=.65in] (i) {i};
		
		\draw [black, thick, shorten <= -2pt, shorten >=-2pt] (a) -- (b);
		\draw [black, thick, shorten <= -2pt, shorten >=-2pt] (a) -- (c);
		\draw [black, thick, shorten <= -2pt, shorten >=-2pt] (c) -- (d);
		\draw [black, thick, shorten <= -2pt, shorten >=-2pt] (b) -- (d);
		\draw [black, thick, shorten <= -2pt, shorten >=-2pt] (c) -- (e);
		\draw [black, thick, shorten <= -2pt, shorten >=-2pt] (e) -- (f);
		\draw [black, thick, shorten <= -2pt, shorten >=-2pt] (e) -- (d);
		\draw [black, thick, shorten <= -2pt, shorten >=-2pt] (e) -- (g);
		\draw [black, thick, shorten <= -2pt, shorten >=-2pt] (g) -- (h);
		\draw [black, thick, shorten <= -2pt, shorten >=-2pt] (f) -- (h);
		\draw [black, thick, shorten <= -2pt, shorten >=-2pt] (i) -- (g);
		\draw [black, thick, shorten <= -2pt, shorten >=-2pt] (i) -- (f);
		\draw [black, thick, shorten <= -2pt, shorten >=-2pt] (i) -- (j);
		\draw [black, thick, shorten <= -2pt, shorten >=-2pt] (g) -- (j);
		\draw [black, thick, shorten <= -2pt, shorten >=-2pt] (h) -- (j);
		\draw [black, thick, shorten <= -2pt, shorten >=-2pt] (i) -- (h);
		\draw [black, thick, shorten <= -2pt, shorten >=-2pt] (d) -- (f);
		\draw [black, thick, shorten <= -2pt, shorten >=-2pt] (c) -- (f);
		
		\draw[rotate=60] (-.45,-2.8) ellipse (.6cm and 1.5cm); 
		\draw[rotate=90] (.8,0) ellipse (1.2cm and .5cm); 
		\draw[rotate=90] (.8,-2.9) ellipse (1.2cm and .5cm); 
		\draw (0,2.9) circle (0.4cm); 
		\draw (2.85,2.9) circle (0.4cm); 
		\draw[rotate=50] (-1.8,-2.4) ellipse (.6cm and 1.5cm); 
		\end{tikzpicture}
		\caption{A \emph{reduced dd-minor} $R_G$ of $G$ \\(also bipartite \emph{permutation} graph).}\label{vpgReducedDD}
	\end{subfigure}
	\begin{subfigure}[b]{0.48\linewidth}
		
		\centering
		\begin{tikzpicture}[scale=.7]
		
		\node (a) at (0,0) {a};
		\node [right of=a, node distance=.7in] (b) {b};
		\node [above of=a, node distance=.5in] (d) {d};
		\node [above of=b, node distance=.5in] (c) {c};
		\node [left of=d, node distance=.5in] (e) {e};
		\node [right of=c, node distance=.5in] (f) {f};
		
		\node [above of=d, node distance=.5in] (h) {h};
		\node [above of=c, node distance=.5in] (j) {j};
		\node [left of=h, node distance=.6in] (i) {i};
		\node [right of=j, node distance=.6in] (g) {g};
		
		\draw [black, thick, shorten <= -2pt, shorten >=-2pt] (a) -- (e);
		\draw [black, thick, shorten <= -2pt, shorten >=-2pt] (a) -- (d);
		\draw [black, thick, shorten <= -2pt, shorten >=-2pt] (a) -- (f);
		\draw [black, thick, shorten <= -2pt, shorten >=-2pt] (b) -- (e);
		\draw [black, thick, shorten <= -2pt, shorten >=-2pt] (b) -- (c);
		\draw [black, thick, shorten <= -2pt, shorten >=-2pt] (b) -- (f);
		
		\draw [black, thick, shorten <= -2pt, shorten >=-2pt] (e) -- (i);
		\draw [black, thick, shorten <= -2pt, shorten >=-2pt] (e) -- (h);
		\draw [black, thick, shorten <= -2pt, shorten >=-2pt] (e) -- (j);
		
		\draw [black, thick, shorten <= -2pt, shorten >=-2pt] (d) -- (i);
		\draw [black, thick, shorten <= -2pt, shorten >=-2pt] (d) -- (h);
		\draw [black, thick, shorten <= -2pt, shorten >=-2pt] (d) -- (j);
		\draw [black, thick, shorten <= -2pt, shorten >=-2pt] (d) -- (g);
		
		\draw [black, thick, shorten <= -2pt, shorten >=-2pt] (c) -- (i);
		\draw [black, thick, shorten <= -2pt, shorten >=-2pt] (c) -- (h);
		\draw [black, thick, shorten <= -2pt, shorten >=-2pt] (c) -- (j);
		\draw [black, thick, shorten <= -2pt, shorten >=-2pt] (c) -- (g);
		
		\draw [black, thick, shorten <= -2pt, shorten >=-2pt] (f) -- (j);
		\draw [black, thick, shorten <= -2pt, shorten >=-2pt] (f) -- (g);
		\end{tikzpicture}
		\caption{Hasse diagram of an arbitrary poset $P_G(V,\prec_P)$ of $G$. Choose and fix a \emph{linear extension} $\sigma = (b,a,c,d,f,e,g,i,h,j)$.}\label{vpgPoset}
	\end{subfigure}
	
	\begin{subfigure}[b]{0.47\linewidth}
		\centering
		\begin{tikzpicture}[scale=.7]
		
		\node (b) at (0,0) {b};
		\node [right of=b, node distance=1.5in] (a) {a};	
		\node [below of=b, node distance=.6in] (c) {c};
		\node [below of=c, node distance=.3in] (e) {e};
		
		\node [below of=e, node distance=.6in] (i) {i};
		\node [below of=i, node distance=.3in] (h) {h};
		
		\node [right of=c, node distance=1.5in] (d) {d};
		\node [below of=d, node distance=.3in] (f) {f};
		
		\node [right of=i, node distance=1.5in] (g) {g};
		\node [below of=g, node distance=.3in] (j) {j};

		\draw [black, thick, shorten <= -2pt, shorten >=-2pt] (a) -- (b);
		\draw [black, thick, shorten <= -2pt, shorten >=-2pt] (a) -- (c);
		\draw [black, thick, shorten <= -2pt, shorten >=-2pt] (c) -- (d);
		\draw [black, thick, shorten <= -2pt, shorten >=-2pt] (b) -- (d);
		\draw [black, thick, shorten <= -2pt, shorten >=-2pt] (c) -- (e);
		\draw [black, thick, shorten <= -2pt, shorten >=-2pt] (e) -- (f);
		\draw [black, thick, shorten <= -2pt, shorten >=-2pt] (e) -- (d);
		\draw [black, thick, shorten <= -2pt, shorten >=-2pt] (e) -- (g);
		\draw [black, thick, shorten <= -2pt, shorten >=-2pt] (g) -- (h);
		\draw [black, thick, shorten <= -2pt, shorten >=-2pt] (f) -- (h);
		\draw [black, thick, shorten <= -2pt, shorten >=-2pt] (i) -- (g);
		\draw [black, thick, shorten <= -2pt, shorten >=-2pt] (i) -- (f);
		\draw [black, thick, shorten <= -2pt, shorten >=-2pt] (i) -- (j);
		\draw [black, thick, shorten <= -2pt, shorten >=-2pt] (g) -- (j);
		\draw [black, thick, shorten <= -2pt, shorten >=-2pt] (h) -- (j);
		\draw [black, thick, shorten <= -2pt, shorten >=-2pt] (i) -- (h);
		\draw [black, thick, shorten <= -2pt, shorten >=-2pt] (d) -- (f);
		\draw [black, thick, shorten <= -2pt, shorten >=-2pt] (c) -- (f);
		
		\draw (0,0) circle (0.4cm);
		\node (b0) at (-.5, -.5) {$B_1$};
		
		\draw (0,-2.7) circle (.8cm);
		\node (b1) at (-.5,-3.7) {$B_3$};
		
		\draw (0,-6) circle (.8cm);
		\node (b3) at (-.5,-6.9) {$B_5$};
		
		\draw (5.5,0) circle (0.4cm);
		\node (b0) at (6.1,-.5) {$B_0$};
		
		\draw (5.5,-2.7) circle (0.8cm);
		\node (b2) at (6.1,-3.7) {$B_2$};
		
		\draw (5.5,-6) circle (.8cm);
		\node (b4) at (6.1,-6.9) {$B_4$};
		\end{tikzpicture}
		\caption{Labeled $R_G$ respecting $\sigma$.}\label{vpgRelabeling}
	\end{subfigure}
	\begin{subfigure}[b]{0.52\linewidth}
		\centering
		\begin{tikzpicture}[scale=.7]

		\draw[step=0.8,lightgray,thin] (0.1,0.1) grid (7.7,6.7);
		
		\node (p0v) at (1.6,0.1) {\tiny $0$};
		\node (p1v) at (2.4,0.1) {\tiny $1$};
		\node (p2v) at (3.2,0.1) {\tiny $2$};
		\node (p3v) at (4,0.1) {\tiny $3$};
		\node (p4v) at (4.8,0.1) {\tiny $4$};
		\node (p5v) at (5.6,0.1) {\tiny $5$};
		
		\node (p1v) at (2.4,6.6) {$\mathcal{I}_1$};
		\node (p3v) at (4,6.6) {$\mathcal{I}_3$};
		\node (p5v) at (5.6,6.6) {$\mathcal{I}_5$};
		
		\node (p0h) at (0,1.6) {\tiny $0$};
		\node (p1h) at (0,2.4) {\tiny $1$};
		\node (p2h) at (0,3.2) {\tiny $2$};
		\node (p0h) at (0,4) {\tiny $3$};
		\node (p4h) at (0,4.8) {\tiny $4$};
		\node (p5h) at (0,5.6) {\tiny $5$};
		
		\node (p0h) at (7.8,1.6) {$\mathcal{I}_0$};
		\node (p2h) at (7.8,3.2) {$\mathcal{I}_2$};
		\node (p4h) at (7.8,4.8) {$\mathcal{I}_4$};
		
		\draw [darkblue, thick, shorten <= -2pt, shorten >=-2pt] (2.4,1.2) -- (2.4,3.6);
		\node (pe) at (2.6,1) {\tiny $I_b$};
		\draw [darkblue, thick, shorten <= -2pt, shorten >=-2pt] (4,1.2) -- (4,3.6);
		\node (pe) at (4.2,1) {\tiny $I_c$};
		\draw [darkblue, thick, shorten <= -2pt, shorten >=-2pt] (4.1,2.8) -- (4.1,5.2);
		\node (pg) at (4.3,5.2) {\tiny $I_e$};
		\draw [darkblue, thick, shorten <= -2pt, shorten >=-2pt] (5.6,2.8) -- (5.6,5.2);
		\node (pd) at (5.4,2.8) {\tiny $I_i$};
		\draw [darkblue, thick, shorten <= -2pt, shorten >=-2pt] (5.7,2.8) -- (5.7,5.2);
		\node (pd) at (5.92,2.8) {\tiny $I_h$};
		\draw [forestgreen, thick, shorten <= -2pt, shorten >=-2pt] (2,1.6) -- (4.4,1.6);
		\node (pd) at (1.8,1.8) {\tiny $I_a$};
		\draw [forestgreen, thick, shorten <= -2pt, shorten >=-2pt] (2,3.2) -- (4.4,3.2);
		\node (pf) at (1.8,3) {\tiny $I_d$};
		\draw [forestgreen, thick, shorten <= -2pt, shorten >=-2pt] (3.6,3.3) -- (6,3.3);
		\node (pd) at (6.1,3.5) {\tiny $I_f$};
		\draw [forestgreen, thick, shorten <= -2pt, shorten >=-2pt] (3.6,4.8) -- (6,4.8);
		\node (pb) at (6.1,4.6) {\tiny $I_g$};
		\draw [forestgreen, thick, shorten <= -2pt, shorten >=-2pt] (5.2,4.9) -- (6,4.9);
		\node (pb) at (5.1,5.1) {\tiny $I_j$};
		
		%
		
		\end{tikzpicture}
		\caption{The drawing $D$ which is also a 2\--D visualization of $P_G$ by Theorem \ref{theorem:2D}.}\label{vpgD}
	\end{subfigure}
	\caption{Drawing a \B0 representation $D$ of a cocomparability graph $G$ satisfying the conditions of Theorem \ref{theorem:vpg}. Note that the collinear intersecting line segments in $D$ are drawn a little apart in order to distinguish them easily.}\label{fig:vpgDrawing}
\end{figure}
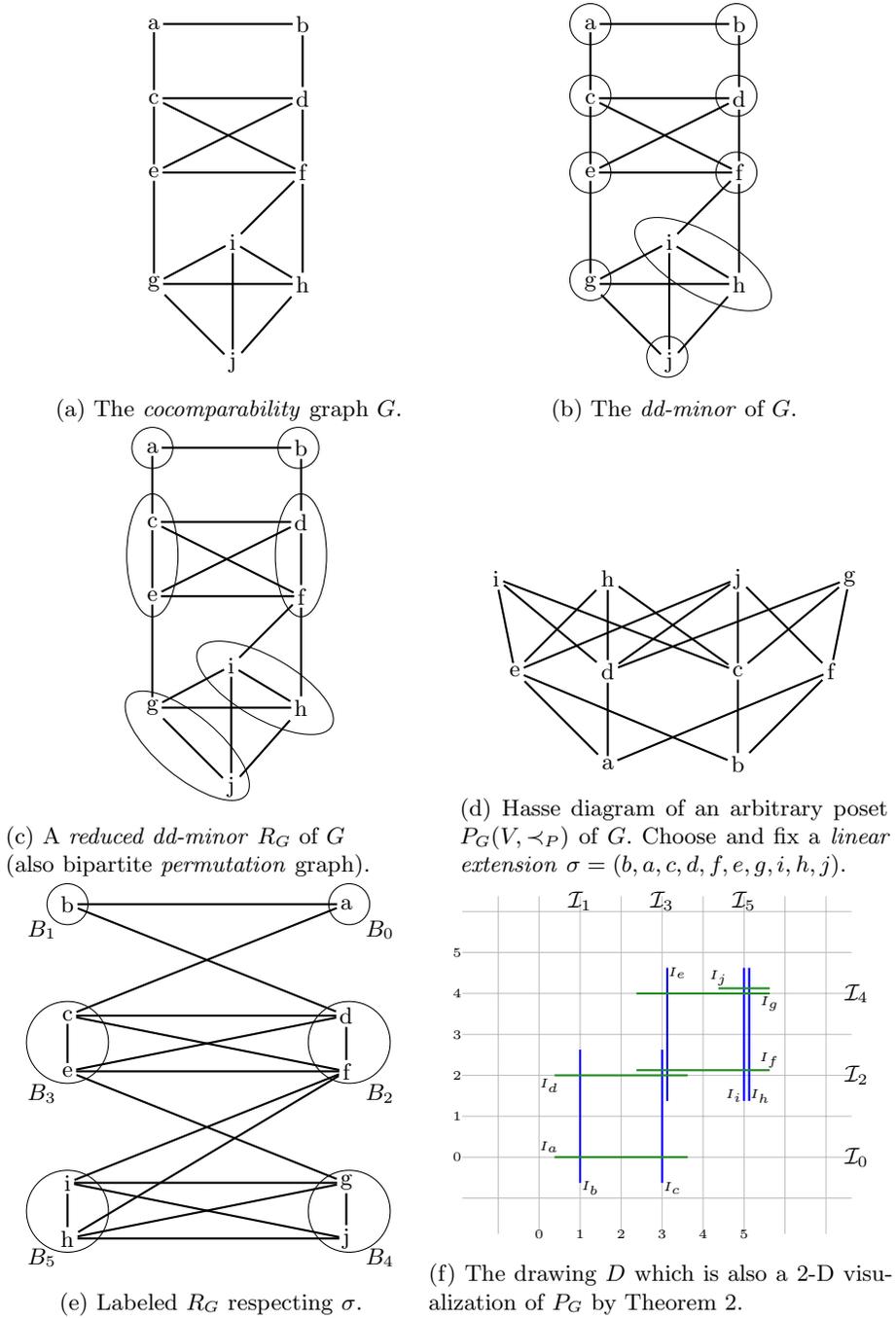
\begin{algorithm}[H]
	
	\caption{\\ \textbf{Input.} $G$ is an arbitrary but fixed  cocomparability graph satisfying the two conditions of Theorem \ref{theorem:vpg}. 
	\\ \textbf{Output.} A \B0 representation $D$ of $G$ .
	\\ \textbf{Assumptions.} \\ 1) $P_G (V(G), \prec_P)$ is an arbitrary but fixed partial order whose comparability graph is $\comp{G}$. \\ 2) $\sigma$ is an arbitrary but fixed linear extension of $P_G$ and hence an umbrella-free ordering for $G$.}
	\label{algo:\B0drawing}
	\begin{enumerate}		
		\itemsep0em
		\item Step 1: Choose an arbitrary but fixed reduced dd-minor $R_G$ of $G$ and label $V(R_G)$ as described in the above mentioned relabeling procedure.
		\item Step 2:
		\begin{enumerate}[label=(\roman*)]
			\item For each branch set $B_i$ of $R_G$, let $B_i^* = B_i \cup \{B_{j,i} : B_iB_j \in E(R_G)\}$ and we obtain an interval representation $\mathcal{I}_i^*$ of $B_i^*$ using Lemma \ref{lemma:interval_R_G}.
			\item Remove intervals of vertices in  $B_i^* \setminus B_i$ from $\mathcal{I}_i^*$ to get $\mathcal{I}_i$.
		\end{enumerate}
		\item Step 3: Construction of the drawing D using the following steps. Let $e$ and $o$ respectively (with further subscripts if needed) denote the
		even and odd indices of the branch sets of $R_G$.
			\begin{enumerate}[label=(\roman*)]
			\item 
			For each odd-indexed branch set $B_o$, $\mathcal{I}_o$ is drawn
			vertically from the point $(o,(e_1-0.5))$ to $(o,(e_2+0.5))$ where
			$B_{e_1}$ and $B_{e_2}$ are the leftmost and rightmost neighbors of
			$B_o$ in $\sigma_{R_G}$.
			\item 
			Stretch or shrink the intervals in each vertical interval
			representation $\mathcal{I}_o$ without changing their intersection
			pattern, so that for each neighbor $B_e$ of $B_o$, the point $p_{e,o}$
			is at $(o,e)$.
			This can be done since the intersection pattern of an interval
			representation with $n$ intervals is solely determined by the order of
			the corresponding $2n$ endpoints.
			
			\item 
			For each even-indexed branch set $B_{e}$, $\mathcal{I}_e$ is drawn
			horizontally from the point $((o_1-0.5),e)$ to $((o_2+0.5),e)$ where
			$B_{o_1}$ and $B_{o_2}$ are the leftmost and rightmost neighbors of
			$B_e$  in $\sigma_{R_G}$.
			\item 
			Stretch or shrink the intervals in each horizontal interval
			representation $\mathcal{I}_{e}$ without changing their intersection
			pattern, so that for each neighbor $B_o$ of $B_e$, the point $p_{o,e}$
			is at $(o,e)$.
		\end{enumerate}
	\end{enumerate}
\end{algorithm}

\begin{proposition}
	\label{proposeD}
	The B$_0$-VPG representation $D$ is precisely a B$_0$-VPG
	representation of the cocomparability graph $G$.
\end{proposition}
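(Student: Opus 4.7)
The plan is to verify, for each pair of vertices $x, y \in V(G)$, that the segments $I_x$ and $I_y$ in $D$ intersect if and only if $xy \in E(G)$, through a case analysis based on which branch sets of $R_G$ contain $x$ and $y$.

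When $x$ and $y$ lie in the same branch set $B_i$, the segments $I_x, I_y$ come from the interval representation $\mathcal{I}_i$; Step~3 stretches $\mathcal{I}_i$ without altering its intersection pattern, so $I_x \cap I_y \neq \emptyset$ iff $xy \in E(G[B_i])$ by Lemma~\ref{lemma:interval_R_G}.\ref{interval1}. When $x \in B_i$ and $y \in B_j$ lie in distinct branch sets of the same parity, $I_x$ and $I_y$ are parallel segments on distinct grid lines and are therefore disjoint; the bipartiteness of $R_G$ (Lemma~\ref{lemma:R_G}.\ref{claimBipartite}) rules out any edge between $B_i$ and $B_j$ in $G$, so non-adjacency agrees.

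Now suppose $x \in B_o$ and $y \in B_e$ with $B_oB_e \in E(R_G)$. The vertical segment $I_x$ and horizontal segment $I_y$ can meet only at the lattice point $(o,e)$. By the stretching in Step~3 and Lemma~\ref{lemma:interval_R_G}.\ref{interval2}, $I_x$ contains $(o,e)$ iff the interval for $x$ in $\mathcal{I}_o^*$ contains the point $p_{e,o}$, which holds iff $x$ is adjacent to some vertex of $B_{e,o}$, namely iff $x \in B_{o,e}$. Symmetrically $I_y$ contains $(o,e)$ iff $y \in B_{e,o}$. Since $B_{o,e} \cup B_{e,o}$ is a clique in $G$ (Lemma~\ref{lemma:R_G}.\ref{claimClique}), $I_x \cap I_y \neq \emptyset$ iff $xy \in E(G)$.

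The main obstacle is the remaining case: $x \in B_o$ and $y \in B_e$ with $B_oB_e \notin E(R_G)$. Non-adjacency in $G$ follows directly from the minor relation, so the task is to show the vertical span of $I_x$ does not contain height $e$. Since $I_x \subseteq \{o\} \times [e_1-0.5, e_2+0.5]$ for the extremal even-indexed neighbors $B_{e_1}, B_{e_2}$ of $B_o$, this reduces to a consecutivity claim: every even $B_{e'}$ with $e_1 \leq e' \leq e_2$ is a neighbor of $B_o$. I plan to prove this by contradiction using umbrella-freeness of $\sigma_{R_G}$: Lemma~\ref{lemma:branchContinuity} orders $B_{e_1} \prec_{\sigma_{R_G}} B_e \prec_{\sigma_{R_G}} B_{e_2}$, and a short case analysis on the position of $B_o$ in $\sigma_{R_G}$ exposes either $(B_o, B_e, B_{e_2})$ or $(B_{e_1}, B_e, B_o)$ as an umbrella in $\sigma_{R_G}$, the sought contradiction. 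The symmetric argument applied to $I_y$ completes the case, finishing the proof.
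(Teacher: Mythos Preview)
Your proposal is correct and follows essentially the same approach as the paper: the same case split on branch sets, the same use of Lemma~\ref{lemma:R_G}.\ref{claimClique} for adjacent branch sets, and the same consecutivity-via-umbrella argument for non-adjacent opposite-parity branch sets. The only cosmetic difference is that the paper packages your consecutivity claim as a separate lemma (Lemma~\ref{lemma:convexR_G}, the adjacency property of the relabeling) and then invokes it, whereas you reprove it inline; your explicit treatment of the same-parity case is subsumed in the paper under the non-adjacent case.
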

The proof of the above proposition is written in Appendix \ref{appAlgo}.
One can easily verify that Algorithm \ref{algo:\B0drawing} runs in polynomial time.

\section{Proof of Theorem \ref{theorem:2D}}


In this section, we fix $P(V, \prec_P)$ as the given input poset and $G$ as its
cocomparability graph. 	Let $D$ be the \B0 representation of $G$ obtained by
the construction employed in the proof of Theorem \ref{theorem:vpg}, where the
partial order $P_G$ assumed in Algorithm \ref{algo:\B0drawing}
is $P$. We argue below that for any two vertices $x$ and $y$ in $V(G)$, $x
\prec_D y$ (cf. Definition~\ref{defnDiagramOrder}) if and only if $x \prec_P y$
and thus establish Theorem~\ref{theorem:2D}.  First, we show that if $x \prec_P
y$, then $x \prec_D y$. But a simple exchange of variables is not enough to
prove the converse because $\prec_D$ is not antisymmetric in the set of all
vertical and horizontal line segments. Hence in order to complete the proof we
show that the relation $\prec_D$ is antisymmetric when restricted to the line
segments in $D$.

\subsection{If $x \prec_P y$ then $x \prec_D y$}

Recall that $\sigma$ is a linear extension of $P_G$.  Thus if $x \prec_P y$,
then $x \prec_\sigma y$ and $x$ is non-adjacent to $y$ in $G$. 

If $x$ and $y$ are in the same branch set $B_i$ of $R_G$ then clearly $I_x
\prec_{\mathcal{I}_i} I_y$ by Lemma \ref{lemma:interval_R_G}.\ref{interval1}.
Here if $I_x$ is horizontal, clearly $x$ \p{v}{h} $y$. Otherwise, $x \p{h}{v}
y$. If $x$ and $y$ are in different branch sets, $B_i$ and $B_j$ respectively,
of the same parity, then $i < j$ (Lemma \ref{lemma:branchContinuity}). Thus in
$D$, if both are of odd parity, $B_i$ is drawn to the left of $B_j$ and if both
are of even parity, $B_i$ is drawn to the bottom of $B_j$. Thus if $I_x$ is
vertical, then $x$ \p{h}{v} $y$ and if $I_x$ is horizontal, then $x$ \p{v}{h}
$y$.

If the parity is opposite, we have two sub-cases; that is $B_i$ and $B_j$ are
either non-adjacent or adjacent. If non-adjacent, the branch set $B_j$ cannot
have a neighbor $B_h$ where $h < i$. Suppose there exists such a neighbor $B_h$
for $B_j$ such that $h < i$. Clearly since $B_h$ is adjacent to $B_j$, $h$ and
$i$ are of the same parity and hence $B_h \prec_{\sigma} B_i$ (Lemma
\ref{lemma:branchContinuity}). Since $B_h$ is adjacent to $B_j$, there exists a
path from a vertex $z \in B_h$ to $y \in B_j$ in $G[B_h \cup B_j]$.  Moreover
since $B_i$ is disjoint from $B_h$ and $B_j$, $x$ has no neighbor in $B_h \cup
B_j$, the hence triple $(z,x,y)$ is a forbidden triple in $\sigma$ which is a
contradiction as per Lemma \ref{lemma:forbiddentriple_contradiction}.  Since
$B_j$ has no opposite parity neighbor $B_h$ for any $h \leq i$, the following
property can easily be verified from our drawing.  Thus if $B_i$ is of even
parity, then $B_i$ is to the bottom of $B_j$ in $D$. Otherwise, $B_i$ to the
left of $B_j$ in $D$. Hence clearly if $I_x$ is horizontal, then $I_x$ is to
the bottom of $I_y$, that is $x$ \p{v}{h} $y$. Similarly, if $I_x$ is vertical,
then $I_x$ is to the left of $I_y$, that is $x$ \p{h}{v} $y$.

Now the remaining sub-case is that $B_i$ and $B_j$ are adjacent.  The following
observation is frequently used in the remaining part of the proof.

\begin{observation} \label{obs:2Dforward}
	If the interval representations $\mathcal{I}_i$ and $\mathcal{I}_j$
	intersects, there exist at least two intersecting intervals $I_{u_1} \in
	\mathcal{I}_i$ and $I_{v_1} \in \mathcal{I}_j$. For any interval $I_u \in
	\mathcal{I}_i$, if $I_u \prec_{\mathcal{I}_i} p_{j,i}$, then $u
	\prec_\sigma v_1$ and if $p_{j,i} \prec_{\mathcal{I}_i}  I_u $ then $v_1
	\prec_\sigma u$. This is easily inferred from $\mathcal{I}_i^*$.  We can
	symmetrically argue the same for $I_v$.
\end{observation}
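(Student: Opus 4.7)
The plan is to deduce the observation directly from the constructive properties of $\mathcal{I}_i^*$ guaranteed by Lemma~\ref{lemma:interval_R_G}, together with the fact that $\sigma$ is a linear extension of $P_G = P$. First I would unpack the hypothesis: segments of $\mathcal{I}_i$ and of $\mathcal{I}_j$ are drawn at opposite orientations in $D$ (same-parity branch sets occupy parallel lines and never meet), so any intersecting pair $I_{u_1} \in \mathcal{I}_i$, $I_{v_1} \in \mathcal{I}_j$ corresponds to an adjacency $u_1 v_1 \in E(G)$ with $u_1 \in B_i$ and $v_1 \in B_j$; equivalently, $u_1 \in B_{i,j}$ and $v_1 \in B_{j,i}$. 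In particular $v_1 \in B_i^*$, and by property~(ii) of Lemma~\ref{lemma:interval_R_G} the interval $I_{v_1}$ is realized inside $\mathcal{I}_i^*$ as the point interval located at $p_{j,i}$. Symmetrically, $I_{u_1}$ is the point interval at $p_{i,j}$ inside $\mathcal{I}_j^*$.

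Next I would prove the ordering claim. Suppose $I_u \prec_{\mathcal{I}_i} p_{j,i}$ for some $u \in B_i$. Since $\mathcal{I}_i$ is obtained from $\mathcal{I}_i^*$ merely by deleting the point intervals corresponding to $B_i^* \setminus B_i$, the interval $I_u$ occupies the same position in $\mathcal{I}_i^*$ and the marked point $p_{j,i}$ is unmoved. Hence $I_u \prec_{\mathcal{I}_i^*} I_{v_1}$. Property~(i) of Lemma~\ref{lemma:interval_R_G} then yields $u \prec_P v_1$, and since $\sigma$ extends $P_G = P$, we conclude $u \prec_\sigma v_1$. The case $p_{j,i} \prec_{\mathcal{I}_i} I_u$ is identical after reversing the orientation and gives $v_1 \prec_\sigma u$. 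The symmetric version for $I_v \in \mathcal{I}_j$ is then obtained by interchanging the roles of the two branch sets, working inside $\mathcal{I}_j^*$ and using $p_{i,j}$ and $u_1$ in place of $p_{j,i}$ and $v_1$.

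The argument presents no real obstacle; the observation is essentially a restatement of Lemma~\ref{lemma:interval_R_G}(i) once one recognizes that the element $v_1$ responsible for the intersection at $p_{j,i}$ already lies inside $B_i^*$. The only delicate point is to verify that the passage from $\mathcal{I}_i^*$ to $\mathcal{I}_i$ preserves both the position of $I_u$ and the location of $p_{j,i}$, which is immediate from the deletion step in Step~2(ii) of the drawing algorithm, since that step only removes some point intervals without perturbing any other endpoint.
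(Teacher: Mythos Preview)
Your argument is correct and is precisely the elaboration the paper intends when it says ``This is easily inferred from $\mathcal{I}_i^*$'': you recognize that $v_1 \in B_{j,i} \subseteq B_i^*$ sits as the point interval at $p_{j,i}$ in $\mathcal{I}_i^*$, and then invoke Lemma~\ref{lemma:interval_R_G}\ref{interval1} together with the fact that $\sigma$ extends $P_G$. There is no difference in approach.
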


The intervals $I_x$ and $I_y$ do not intersect since $x$ and $y$ are
non-adjacent in $G$. If $I_x$ contains $p_{j,i}$, then $p_{i,j}$ (geometrically
coinciding with $p_{j,i}$) has to precede $I_y$ in $\mathcal{I}_j$. Otherwise
due to Observation \ref{obs:2Dforward}, we get $y \prec_\sigma x$ which is a
contradiction. Similarly if $I_y$ contains $p_{i,j}$, then $I_x$ has to
precede $p_{j,i}$ in $\mathcal{I}_i$. In both these case, if $I_x$ is
horizontal, then $x$ \p{v}{h} $y$ and if $I_x$ is vertical, then $x$ \p{h}{v}
$y$. Henceforth we assume that neither $I_x$ nor $I_y$ contains the
intersection point of $\mathcal{I}_i$ and $\mathcal{I}_j$. In this case, it is
easy to see that $x$ has no neighbors in $B_j$ and $y$ has no neighbors in
$B_i$.

In order to rule out the following scenarios, we show the existence of a
forbidden triple in $\sigma$ which leads to a contradiction as per the Lemma
\ref{lemma:forbiddentriple_contradiction}.

If $I_y \prec_{\mathcal{I}_j} p_{i,j}$, then $x \prec_\sigma y \prec_\sigma
u_1$ as per Observation \ref{obs:2Dforward}. Thus $(x,y,u_1)$ is a forbidden
triple.

If $p_{j,i} \prec_{\mathcal{I}_i}  I_x$, then $v_1 \prec_\sigma x \prec_\sigma
y$ as per Observation \ref{obs:2Dforward}. Thus $(v_1,x,y)$ is a forbidden
triple.

Hence $p_{i,j} \prec_{\mathcal{I}_j} I_y$ and $I_x \prec_{\mathcal{I}_i} p_{j,i}$. In this case, if $I_x$ is horizontal, $x$ \p{v}{h} $y$ and if $I_x$ is vertical, we get $x$ \p{h}{v} $y$. 
Moreover, in both these cases, $I_x$ is to the left and to the bottom of $I_y$.

Thus we have proved that when $x \prec_P y$, we get that $x$ \p{v}{h} $y$ when $I_x$ is horizontal or $x$ \p{h}{v} $y$ when $I_x$ is vertical in the \B0 representation $D$ of $G$. That is $x \prec_D y$.
If $x$ and $y$ are incomparable in $P_G$, then they are adjacent in $G$ and the corresponding intervals intersect in $D$. 

\subsection{Antisymmetry of  $\prec_D$}	

\begin{observation} \label{obs:2Dconverse1}
	If two opposite parity branch sets are non-intersecting, then one of them is entirely to the bottom left of the other in $D$. Hence for all $I_x$ in the bottom left branch set and for all $I_y$ in the top right branch set, $x \prec_D y$.
\end{observation}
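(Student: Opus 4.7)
My plan is to reduce the claim to a combinatorial statement about the reduced dd-minor $R_G$ and then invoke Lemma~\ref{lemma:forbiddentriple_contradiction} twice to derive the two coordinate inequalities needed to place one branch set strictly to the bottom left of the other. Let $B_o$ (odd) and $B_e$ (even) be non-intersecting branch sets of $R_G$ in $D$. First, I would note that from the construction in Algorithm~\ref{algo:\B0drawing} together with the biconvexity of $R_G$ (Lemma~\ref{lemma:R_G}.\ref{claimBipartite}), non-intersection in $D$ is equivalent to non-adjacency in $R_G$. Let $L(o), R(o)$ denote the smallest and largest even indices of neighbors of $B_o$ in $R_G$, so the segment of $B_o$ in $D$ is $\{o\} \times [L(o) - 0.5, R(o) + 0.5]$; define $L(e), R(e)$ analogously.

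Let $o^* \in B_o$ and $e^* \in B_e$ be the $\sigma$-leftmost vertices, i.e., the representatives of these branch sets in $\sigma_{R_G}$. Without loss of generality assume $o^* \prec_\sigma e^*$, the other case being symmetric. The task is then to show $o < L(e)$ and $e > R(o)$, which together place $B_o$ strictly to the bottom left of $B_e$ in $D$.

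For $o < L(e)$, I would reuse the forbidden-triple argument already employed earlier in this section. Suppose for contradiction that $B_e$ has a neighbor $B_h$ with $h < o$. Since $R_G$ is bipartite, $B_h$ and $B_o$ sit on the same side, so by Lemma~\ref{lemma:branchContinuity} $B_h \prec_\sigma B_o$; picking any $z \in B_h$, any path in $G[B_h \cup B_e]$ from $z$ to $e^*$ avoids $N[o^*]$ (because $B_o$ has no $R_G$-edge to $B_h$ or to $B_e$, hence no $G$-edge to either), making $(z, o^*, e^*)$ a forbidden triple. Hence $L(e) \geq o$, and non-adjacency forces $L(e) > o$. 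The inequality $e > R(o)$ is obtained by the mirror argument: if $B_o$ had an even neighbor $B_h$ with $h > e$, then for any $w \in B_h$, $(o^*, e^*, w)$ would be a forbidden triple via a path in $G[B_o \cup B_h]$ avoiding $N[e^*]$.

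With $B_o$ strictly to the bottom left of $B_e$, for any $x \in B_o$ and $y \in B_e$ the vertical $I_x$ lies strictly to the left of and below the horizontal $I_y$; in particular $I_x$ is to the left of $I_y$, so $x \p{h}{v} y$, and since $I_x$ is vertical, Definition~\ref{defnDiagramOrder} yields $x \prec_D y$. In the symmetric case $e^* \prec_\sigma o^*$, $B_e$ ends up bottom left of $B_o$ and one gets $x \p{v}{h} y$ for $x \in B_e, y \in B_o$, again giving $x \prec_D y$. I expect the main subtlety to be that both coordinate inequalities must be pinned down independently---two forbidden-triple invocations per case---since non-adjacency in $R_G$ alone only excludes $B_o$ from one of the four sides of $B_e$'s bounding box, not specifically from its top-right quadrant.
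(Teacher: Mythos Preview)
Your proposal is correct and follows essentially the same forbidden-triple strategy as the paper. The only difference is that the paper first proves an auxiliary fact---that two non-intersecting opposite-parity branch sets are \emph{separated} in $\sigma$ (every vertex of one precedes every vertex of the other)---and then uses arbitrary vertices $x \in B_i$, $y \in B_j$ in the two subsequent forbidden-triple arguments. You bypass this by working directly with the $\sigma$-leftmost representatives $o^*$ and $e^*$, which makes your argument one step shorter (two forbidden-triple invocations instead of the paper's three) while arriving at the same two coordinate inequalities $o < L(e)$ and $R(o) < e$.
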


\noindent \textit{Justification.}
When two branch sets are non-intersecting, they are \emph{separated in
	$\sigma$} since otherwise, there will exist a forbidden triple $u \prec_\sigma
v \prec_\sigma w$ such that $u$ and $w$ are in the one branch set and $v$ in
the other branch set.  Without loss of generality, let $B_i \prec_\sigma B_j$.
We claim that $B_i$ is entirely to the bottom left of $B_j$. Assume not.
That is either $B_i$ has an opposite parity neighbor $B_k$ for some $k > j$ or
$B_j$ has an opposite parity neighbor $B_h$ for some $h < i$ or both. 
In the first case, there exists a forbidden triple $(x, y, z)$ for any $x\in B_i$, $y \in B_j$ and $z \in B_k$ 
which is a contradiction by Lemma \ref{lemma:forbiddentriple_contradiction}.
Similarly in the second case, there exists a forbidden triple $(w,x,y)$ for any
$w \in B_h$, $x\in B_i$ and $y \in B_j$ which is again a contradiction by Lemma
\ref{lemma:forbiddentriple_contradiction}.

\begin{observation}\label{obs:2Dconverse2}
	In $D$, there is no line segment $I_b$ which is to the bottom right of a line segment $I_t$ of an opposite parity branch set.
\end{observation}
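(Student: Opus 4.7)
The plan is a proof by contradiction that leans on the forward direction of Theorem \ref{theorem:2D} (just established) together with Observation \ref{obs:2Dconverse1}. Assume, for contradiction, that some $I_b$ lies strictly to the bottom right of an opposite-parity $I_t$ in $D$. Then $I_b$ and $I_t$ are spatially disjoint, so by Proposition \ref{proposeD} the vertices $b$ and $t$ are non-adjacent in $G$ and hence comparable in $P_G$. Let $B_i$ and $B_j$ be the branch sets of $b$ and $t$ respectively.

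If $B_i$ and $B_j$ are non-adjacent in $R_G$, then (since the convex neighborhood structure of the bipartite permutation graph $R_G$ under our labeling places the odd branch set's column outside the horizontal range of the even branch set) their line representations in $D$ do not intersect. Observation \ref{obs:2Dconverse1} then puts one branch set entirely to the bottom-left of the other, making every segment of one bottom-left of every segment of the other, which directly contradicts $I_b$ being bottom-right of $I_t$. So $B_i$ and $B_j$ must be adjacent in $R_G$, and their line representations cross at the single point $p = (o, e)$, where $o$ is the column of the odd-indexed branch set and $e$ is the level of the even-indexed one.

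Writing $x \prec_P y$ for whichever of $b \prec_P t$ or $t \prec_P b$ holds, I re-run the opposite-parity-adjacent case analysis from the forward proof. It shows exactly one of the following three sub-cases applies: (i) $I_x$ contains $p$ and $p \prec_{\mathcal{I}_j} I_y$; (ii) $I_y$ contains $p$ and $I_x \prec_{\mathcal{I}_i} p$; or (iii) neither contains $p$, and both $I_x \prec_{\mathcal{I}_i} p$ and $p \prec_{\mathcal{I}_j} I_y$ hold. Under the translation \emph{abstract left in a horizontal $\mathcal{I}_e$ is geometric left in $D$; abstract left in a vertical $\mathcal{I}_o$ is geometric below in $D$}, sub-cases (i) and (ii) force the relevant segment to straddle $p$ (its range must include $o$ or $e$), which is incompatible with $I_b$ being strictly to the right of the column or strictly below the level of $I_t$; sub-case (iii) squeezes $I_x$ into the bottom-left quadrant of $I_y$, again contradicting the bottom-right hypothesis for either choice of $\{x, y\} = \{b, t\}$. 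The main obstacle is only the bookkeeping: one enumerates the two parity-orientations of $(I_b, I_t)$ and the two assignments of $\{x, y\}$, but after the interval-order-to-geometry dictionary is fixed, each of the resulting cases collapses to a one-line geometric inequality.
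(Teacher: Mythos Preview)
Your proof is correct. The non-adjacent case is handled identically to the paper via Observation~\ref{obs:2Dconverse1}. For the adjacent case, the paper takes a more direct route: it simply picks intersecting witnesses $I_{t_1}\in\mathcal{I}_i$, $I_{b_1}\in\mathcal{I}_j$ from Observation~\ref{obs:2Dforward} and, depending on whether $t\prec_\sigma b$ or $b\prec_\sigma t$, exhibits a forbidden triple (with $t_1$ or $b_1$ as the third vertex) contradicting Lemma~\ref{lemma:forbiddentriple_contradiction}. Your version instead recycles the three-subcase analysis of the forward implication and argues geometrically that none of (i)--(iii) is compatible with a bottom-right configuration. Both arguments rest on the same machinery (Observation~\ref{obs:2Dforward} and the forbidden-triple lemma); yours trades the paper's single fresh forbidden-triple construction for a reuse of already-proved casework, at the cost of the parity/role bookkeeping you acknowledge at the end.
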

\noindent \textit{Justification.} Assume $I_t$ is in $\mathcal{I}_i$ and $I_b$ is in $\mathcal{I}_j$. 
The branch sets $B_i$ and $B_j$ are of the opposite parity. If they are non-adjacent, then by Observation \ref{obs:2Dconverse1}, either $I_t$ has to be bottom left of $I_b$ or $I_b$ has to be bottom left of $I_t$. 
In both cases, $I_b$ can not be bottom right of $I_t$. Now we consider the case when the branch sets are adjacent. 
Since $I_t$ and $I_b$ are non-intersecting, there exists intersecting intervals $I_{t_1} \in \mathcal{I}_i$ and $I_{b_1} \in \mathcal{I}_j$ as per Observation \ref{obs:2Dforward}.
In $\sigma$, either $t$ precedes $b$ or $b$ precedes $t$. These result in the forbidden triple either $(t,b,t_1)$ or $(b,t,b_1)$ respectively leading to a contradiction by Lemma \ref{lemma:forbiddentriple_contradiction}.

\begin{lemma} \label{lemma:2Dconverse3}
	Any two non-intersecting line segments $I_x$ and $I_y$ in $D$ satisfy
	either $x \prec_D y$ or $y \prec_D x$, but not both. In particular, the
	relation $\prec_D$ is antisymmetric.
\end{lemma}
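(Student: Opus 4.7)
The plan is to prove the dichotomy (``exactly one of $x \prec_D y$ and $y \prec_D x$ holds'') directly, from which antisymmetry is immediate. I will do a case analysis on the orientations of $I_x$ and $I_y$, using Observations~\ref{obs:2Dconverse1} and~\ref{obs:2Dconverse2} to control the geometry of opposite-parity pairs, and using the explicit coordinates from Algorithm~\ref{algo:\B0drawing} for same-parity pairs.

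The same-orientation case is the easy one. Suppose both $I_x$ and $I_y$ are horizontal, so $x, y$ lie in even-indexed branch sets $B_e, B_{e'}$, drawn on the horizontal lines $y=e$ and $y=e'$ respectively. If $e=e'$ the segments are collinear, so non-intersection forces one strictly left of the other and the ``both intersected by a horizontal line'' clause of $\p{v}{h}$ fires in exactly one direction. If $e\neq e'$, Lemma~\ref{lemma:branchContinuity} together with the construction of $D$ ensures that $\mathcal{I}_e$ sits strictly below $\mathcal{I}_{e'}$ iff $e<e'$, so exactly one segment is vertically below the other. The all-vertical case is symmetric.

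The substantive case is when $I_x$ is horizontal and $I_y$ is vertical, with $x\in B_e$, $y\in B_o$. If $B_e$ and $B_o$ are non-adjacent in $R_G$, Observation~\ref{obs:2Dconverse1} forces one branch set entirely into the bottom-left quadrant of the other, and a direct check of Definition~\ref{defnDiagramOrder} against that geometry yields a unique $\prec_D$ relation. If $B_e$ and $B_o$ are adjacent at the grid point $(o,e)$, I would split on whether $I_x$ contains $(o,e)$ in its $x$-span and whether $I_y$ contains $(o,e)$ in its $y$-span; non-intersection rules out both being true. If exactly one of them contains the intersection point, the other lies strictly above or below (respectively, strictly left or right) of it along a common axis, and the corresponding ``intersected by a line'' clause of $\prec_D$ fires in exactly one direction. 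If neither contains it, the two segments occupy opposite corners relative to $(o,e)$, and Observation~\ref{obs:2Dconverse2} eliminates the two ``bottom-right'' configurations, leaving only two ``bottom-left'' configurations, each with a unique $\prec_D$ relation.

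The main obstacle is precisely this last sub-case: when two opposite-parity segments are offset from the intersection point in both coordinates, a ``top-left versus bottom-right'' arrangement would make both $x \p{v}{h} y$ (via ``$I_x$ vertically below $I_y$'') and $y \p{h}{v} x$ (via ``$I_y$ to the left of $I_x$'') fire at the same time, breaking antisymmetry. Observation~\ref{obs:2Dconverse2} is exactly the geometric statement that rules this out, and so is the lynchpin of the argument; the rest of the case analysis is routine checking against Definition~\ref{defnDiagramOrder}.
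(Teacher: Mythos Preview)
Your case analysis is correct and the use of Observations~\ref{obs:2Dconverse1} and~\ref{obs:2Dconverse2} is exactly the right tool in each place; the four-corner split around $(o,e)$ in the adjacent case is accurate, and Observation~\ref{obs:2Dconverse2} indeed kills both bad quadrants (one breaking antisymmetry, the other breaking totality).

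The paper, however, takes a noticeably shorter route. Instead of proving the full dichotomy geometrically, it observes that non-intersecting segments correspond to nonadjacent vertices in $G$, hence to \emph{comparable} elements in $P$; the already-proved forward implication ($x \prec_P y \Rightarrow x \prec_D y$) then gives ``at least one of $x \prec_D y$, $y \prec_D x$'' for free. All that remains is to rule out ``both'', which is immediate for same-orientation pairs and, for mixed-orientation pairs, amounts to noting that $x \prec_D y$ and $y \prec_D x$ together force $I_x$ to sit bottom-right of $I_y$, contradicting Observation~\ref{obs:2Dconverse2}. In particular the paper never needs Observation~\ref{obs:2Dconverse1} here, nor the adjacent/non-adjacent split, nor the four-corner analysis. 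Your approach has the merit of being purely geometric (it establishes the dichotomy as an intrinsic property of $D$ without reference to $P$), but the paper's leverage of the forward direction makes for a two-line argument where you do a page of case-checking.
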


\begin{proof}
	Since $D$ is a \B0 representation of $G$, $x$ and $y$ are nonadjacent in
	$G$ and hence comparable in $P$.  That is, either $x \prec_P y$ or $y
	\prec_P x$. Therefore, $x \prec_D y$ or $y \prec_D x$. Hence it is enough
	to verify that $x \prec_D y$ and $y \prec_D x$ cannot both be true.  This is
	easily verified when $I_x$ and $I_y$ are both horizontal or both vertical.
	Hence we can assume without loss of generality that $I_x$ is horizontal and
	$I_y$ is vertical. If both $x \prec_D y$ and $y \prec_D x$, then $I_x$ has
	to be to the bottom right of $I_y$. This contradicts
	Observation~\ref{obs:2Dconverse2}. 
\end{proof}

This concludes the proof of Theorem~\ref{theorem:2D}.
For every two incomparable elements in $\prec_P$, the corresponding line
segments intersect in $D$. For every two comparable elements $x,y \in V(G)$, if
$x \prec_P y$, then $x \prec_D y$. By exchange of variables, if $y \prec_P x$,
then $y \prec_D x$. Lemma~\ref{lemma:2Dconverse3} asserts that exactly one
of the above is true for any two non-intersecting line segments. Hence $x
\prec_D y$ only when $x \prec_P y$.  Thus the relation $\prec_D$ is isomorphic
to the relation $\prec_P$.

\newpage
\begin{subappendices}
	\renewcommand{\thesection}{\Alph{section}}%

	\section{Proof of Correctness of Algorithm \ref{algo:\B0drawing}} \label{appAlgo}
	
	\subsection{Step 1. Reduced dd-minor $R_G$ of $G$}
	In this step, we contract a subset of edges of $G$
	to obtain a bipartite minor $R_G$ of $G$ which satisfies the additional
	properties listed in Lemma~\ref{lemma:R_G}. 
	We also label the branch-sets of $R_G$ so that the order implied by this
	labeling has the adjacency property (Lemma~\ref{lemma:convexR_G}).

	We see a few additional properties of $R_G$ using which we draw the B$_0$-VPG representation of $G$. These properties are clubbed into the following lemma.
	
		
	\bigskip \noindent \textbf{Lemma \ref{lemma:R_G}} Statement:
		\emph{The following claims on the cocomparability graph $R_G$ are true.
		\begin{enumerate}
			\item For any two adjacent branch sets $B_1$ and $B_2$, the set
			$B_{1,2} \cup B_{2,1}$ induces a clique in $G$.
			\item \noindent If $B_0$, $B_1$, $B_2$ form consecutive vertices of a
			$C_3$ or an induced $C_4$ in $R_G$ then $B_{1,0} \cap B_{1,2} \neq \emptyset$. Moreover, if $B_0, \ldots, B_{k-1}$ is a $C_3$ or an induced $C_4$ in $R_G$, then there exists an induced cycle $b_0, \ldots, b_{k-1}$ in $G$  where each $b_i \in B_i$.
			\item $R_G$ is a bipartite permutation graph.
		\end{enumerate}}
	\begin{proof}
		We prove the claims in the order they are listed.
		
		\bigskip \label{claimCliqueproof} \noindent \textbf{Proof of Claim 1}: First we need to prove that for any two adjacent branch sets $B_1$ and $B_2$, the subgraph of $G$ induced by $B_1 \cup B_2$ is an interval graph. 		
		Since no branch set of $R_G$ contains more than one vertex of
		an induced $C_4$, $G[B_1 \cup B_2]$ is a $C_4$-free cocomparability graph 
		and therefore an interval graph \cite{gilmore1964characterization}.
		
		Now consider any two non-adjacent vertices $x,y \in B_{1,2}$ having a common neighbor $z_1 \in B_1$. 
		Assume $x$ and $y$ are adjacent to a single vertex $z_2 \in B_{2,1}$. 
		In order to avoid $x$ and $y$ being part of an induced $C_4$, the possible edge is $z_1z_2$.
		Clearly $z_1z_2$ is a diamond diagonal edge across two branch sets. This is a contradiction. 
		
		Now we assume that there exists no common vertex $z_2$ in the adjacencies of $x$ and $y$ in $B_{2,1}$.
		Consider a shortest path $xPy$ from $x$ to $y$ where all the intermediate vertices are in $B_2$.
		Let $x^\prime$ and $y^\prime$ be the vertices adjacent to $x$ and $y$ respectively in the path.
		Clearly there cannot be an edge from $x$ or $y$ to the intermediate vertices of $x P y$ since otherwise $xPy$ cannot be a shortest path.
		Thus only possible edges in order to avoid bigger cycles due to the cocomparability property and the interval nature of the graph $G[B_1 \cup B_2]$, are from $z_1$ to the intermediate vertices of the path $x P y$. 
		Hence $z_1x^\prime$ and $z_1y^\prime$ edges must exist. As in the previous paragraph, an edge $x^\prime y^\prime$ also exists. 
		This forms a diamond diagonal $z_1x^\prime$ of the diamond induced by the set $\{x,x^\prime,y^\prime,z_1\}$ which is a contradiction.
		
		Now suppose a vertex $x_1 \in B_{1,2}$ is not adjacent to a vertex $y_1 \in B_{2,1}$. Then
		let $y_2$ be a neighbor of $x_1$ in $B_{2,1}$ and $x_2$ be a neighbor of $y_1$
		in $B_{1,2}$.  The existence of $x_2$ and $y_2$ are guaranteed by the
		definition of $B_{i,j}$. Notice that since $G[B_{1,2}]$ and $G[B_{2,1}]$
		are cliques, $x_1$ is adjacent to $x_2$ and $y_1$ is adjacent to $y_2$.  
		If $x_2$ and $y_2$ are non-adjacent, $G[\{x_1, y_2, y_1, x_2\}]$ is an induced $C_4$ in $B_1 \cup B_2$ which is a contradiction.  If $x_2$ and $y_2$ are adjacent, the edge
		${x_2y_2}$ is the diagonal of the diamond $G[\{x_1, y_2, y_1, x_2\}]$ which is a contradiction.
		
		\bigskip \label{claimConsecutiveProof} \noindent \textbf{Proof of Claim 2}:  First we need to prove that any induced cycle $C_n, n \geq 3$ in $R_G$ ensures the existence of an induced cycle $C_m$ in $G$ where $m \geq n$. 			
		Consider any induced cycle $C_n = \{B_0, \ldots, B_{n-1}\}$ in $R_G$. 
		Let $G_n$ be the induced subgraph $G[B_0 \cup \cdots \cup B_{n-1}]$.
		For any $i \in \{0, \ldots, n-1\}$, if $B_{i,i-1} \cap B_{i,i+1} \neq \emptyset$ (addition and subtraction are $modulo$ $n$), choose any vertex $b_{i}$ in that intersection. 
		On the contrary, if $B_{i,i-1} \cap B_{i,i+1} = \emptyset$, then consider the vertices $b_{i,i-1} \in B_{i,i-1}$ and $b_{i,i+1} \in B_{i,i+1}$ such that the distance between them is the shortest. Thus there exists a shortest path $b_{i,i-1} P b_{i,i+1}$ which cannot contain an intermediate vertex in the  sets $B_{i,i-1}$ and $B_{i,i+1}$. 		
		By Claim \ref{claimClique}, $B_{i,i-1} \cup B_{i-1,i}$ induces a clique. Similarly $B_{i,i+1} \cup B_{i+1,i}$ also induces a clique. This guarantees the adjacency between the chosen vertices of any two consecutive branch sets. 
		Thus we get a cycle $C_m$ in $G$ in which each branch set $B_{i}$ contributes either a vertex $b_{i}$ or a path $b_{i,i-1} P b_{i,i+1}$.
		Clearly there is no chord in $C_m$ between two vertices belonging to two non-adjacent branch sets in $C_n$ since it becomes a chord in $C_n$ too.
		Similarly there is no chord in $C_m$ between two vertices belonging to two adjacent branch sets, say $B_i$ and $B_{i+1}$, in $C_n$ since we select exactly one vertex each from $B_{i,i+1}$ and $B_{i+1,i}$.
		Thus if there exists an $i$ with condition $B_{i,i-1} \cap B_{i,i+1} = \emptyset$, then we get $m > n$. Otherwise $m = n$.
		
		Now suppose $C$ is an induced cycle containing $B_0$, $B_1$ and $B_2$ as consecutive vertices. 
		Clearly $C$ can be either $C_3$ or $C_4$ since $R_G$ is a cocomparability graph.
		Assume it is a $C_3$. Then $\{B_0,B_1,B_2\}$ induces $C$.
		By the discussion in the previous paragraph, if $B_{1,0} \cap B_{1,2} = \emptyset$, then there exists a bigger induced cycle $C_m$ in $G$. The only possible bigger induced cycle is a $C_4$ which contradicts the fact that two vertices of an induced $C_4$ resides inside a branch set. 	
		Now assume $C$ is a $C_4$, that is $\{B_0,B_1,B_2,B_3\}$ is an induced $C_4$.
		By the first step, if $B_{1,0} \cap B_{1,2} = \emptyset$, then there exists a bigger induced cycle $C_m$ in $G$, $m>4$. This is a contradiction.
		
		Thus by picking $b_i \in B_{i,i-1} \cap B_{i,i+1}$ $(0\leq i \leq k-1)$, we get the 
		an induced $k$-cycle in $G$ with $b_i \in B_i$.
		
		\bigskip \label{claimBipartiteproof} \noindent \textbf{Proof of Claim 3}: First we need to prove the following sub-claims.
		\begin{enumerate}[label=(3.\roman*)]
			\item \label{triangleClique} \noindent If $B_0, B_1$, $B_2$ induce a triangle in $R_G$ then 
			$B_{i,i-1} = B_{i,i+1}$ $(0\leq i\leq2)$ where addition and subtraction are modulo $3$ and hence the set $B_{0,1} \cup B_{1,2} \cup B_{2,0}$
			induces a clique in $G$. 
			
			\textit{Proof:} Using Claim \ref{claimConsecutive}, choose the vertices $ b_0 \in B_{0,1} \cap B_{0,2}$, $ b_1 \in B_{1,0} \cap B_{1,2}$ and $ b_2 \in B_{2,0} \cap B_{2,1}$.
			Clearly $\{b_0,b_1,b_2\}$ induces a triangle. 
			Let $i=1$ without loss of generality. Assume $B_{1,0} \neq B_{1,2}$. Let $B_{1,0} \setminus B_{1,2} \neq \emptyset$ without loss of generality. 
			Take a vertex $b_1^\prime \in B_{1,0} \setminus B_{1,2}$.
			Since $B_{1,0} \cup B_{0,1}$ is a clique, $b_1^\prime$ is adjacent to $b_0$ and $b_1$.
			Moreover since $b_1^\prime \notin B_{1,2}$, it is not adjacent to $b_2$. Hence $\{b_0,b_1,b_2,b_1^\prime\}$ induces a diamond with the diamond-diagonal $b_0b_1$ between $B_0$ and $B_1$ which is a contradiction. $\blacksquare$
			
			\item \label{underlyingC4} \noindent If $\{b_0,b_1,b_2,b_3\}$ is an induced $C_4$ in $G$, then 
			$\{B_0,B_1,B_2,B_3\}$ is an induced $C_4$ in $R_G$, where for each $i$,
			$B_i$ is the branch set containing $b_i$.
			
			\textit{Proof:} 	By the definition of $R_G$, it is clear that the branch sets $B_0,B_1,B_2,B_3$ are all distinct and
			$B_0B_1,B_1B_2,B_2B_3,B_1B_3$ are edges in $R_G$.
			Assume that $\{B_0,B_1,B_2,B_3\}$ is not an induced $C_4$. 
			Without loss of generality, if $B_0B_2$ is an edge in $R_G$, then $R_G$ has a triangle $\{B_0,B_1,B_2\}$.
			By the sub-claim \ref{triangleClique}, $B_{0,1} = B_{0,2}$ and $B_{2,0} = B_{2,1}$. Hence $b_0 \in B_{0,2}$ and $b_2 \in B_{2,0}$.
			By Claim \ref{claimClique}, $b_0b_2$ edge also exists in $G$ which contradicts the fact that $\{b_0,b_1,b_2,b_3\}$ is an induced $C_4$.
			Hence $\{B_0,B_1,B_2,B_3\}$ is also an induced $C_4$. $\blacksquare$
			
			\item \label{triangleR_G} \noindent Every edge of a triangle in $R_G$ is also part of an induced $C_4$ in $R_G$.
			
			\textit{Proof:} By the minimality of $R_G$, any edge is contracted if the resulting branch set does not contain two vertices of an induced $C_4$ of $G$.
			Thus if there exists a triangle in $R_G$, every pair of branch sets constituting the triangle contains two vertices of an induced $C_4$ of $G$.
			Also by the sub-claim \ref{underlyingC4}, each such pair of branch sets should be adjacent vertices of an induced $C_4$ of $R_G$.
			Hence the statement. $\blacksquare$ 
			
			\item \label{triangleFree} \noindent $R_G$ is a triangle-free.
			
			\textit{Proof:} By the previous sub-claim, if there exists a triangle $R_3$ in $R_G$ induced by the set $\{B_0,B_1,B_2\}$, each edge of $R_3$ will also be part of an induced $C_4$ of $R_G$. Consider an induced subgraph $R_M$ of $R_G$ with \emph{minimum} number of vertices such that it contains $V(R_3)$ and branch sets corresponding to the induced $4$-cycles sharing edge with $R_3$ (Figure \ref{fig:triangleC_4}). 
			Each induced $C_4$ sharing edge $B_iB_{i+1}$ is denoted as $\{B_i,B_{i+1}, B_{{i+1}^\prime},B_{i^{\prime\prime}}\}$ where $i\in \{0,1,2\}$ and addition is modulo $3$.
			We do not claim the adjacencies or non-adjacencies among the pair $\{B_{i^\prime},B_{i^{\prime\prime}}\}$.
			Clearly any vertex in the set $B_{i^{\prime\prime}}$ (respectively $B_{i^\prime}$) is non-adjacent to the a vertex in $B_{i+1}$ (respectively $B_{i+2}$) since they are non adjacent vertices of an induced $C_4$. 
			Similarly any vertex in the set $B_{i^{\prime\prime}}$ (respectively $B_{i^\prime}$) is non-adjacent to the a vertex in $B_{i+2}$ (respectively $B_{i+1}$) since otherwise it causes the existence of a diamond diagonal across two branch sets.			
			For any $i$, if $B_{i^\prime} \neq B_{i^{\prime\prime}}$, then $B_{i^\prime}$ must be non-adjacent to $B_{{i+1}^\prime}$ and $B_{i^{\prime\prime}}$ must be non-adjacent to $B_{{i+2}^{\prime\prime}}$ since we consider the minimal graph $R_M$.
			
			Now in the minimal graph $R_M$, if $B_{i^\prime} \neq B_{i^{\prime\prime}}$ for all $i\in \{0,1,2\}$, then there exists an \emph{asteroidal triple} $\{B_{0^\prime},B_{1^\prime},B_{2^\prime}\}$ in $R_M$ (hence in $R_G$) which contradicts the fact that $R_G$ is a cocomparability graph.
			If exactly two such pairs are unequal, without loss of generality $B_{0^\prime} \neq B_{0^{\prime\prime}}$ and $B_{2^\prime} \neq B_{2^{\prime\prime}}$, then there exists a $5$-cycle induced by the set $\{B_{0^{\prime\prime}},B_0,B_2,B_{2^\prime},B_{1^\prime}\}$ in $R_M$ (hence in $R_G$) which is a contradiction. 
			If exactly one such pair is unequal, without loss of generality $B_{0^\prime} \neq B_{0^{\prime\prime}}$, then there exists the same $5$-cycle in $R_M$ which is again a contradiction.
			If all such pairs are equal, then the set $\{B_{0^\prime},B_{1^\prime},B_{2^\prime}\}$ induces a triangle.
			As per Sub-claim \ref{triangleClique}, $B_{0^\prime,1^\prime} \cup B_{1^\prime,2^\prime} \cup B_{2^\prime,0^\prime}$ induces a clique. Similarly $B_{0,1} \cup B_{1,2} \cup B_{2,0}$ also induces a clique. 
			For $i \in \{0,1,2\}$, choose the vertices $b_i \in B_{i,i^{\prime}} \cap B_{i,i+1}$ which is nonempty by Claim $(\ref{claimConsecutive})$ and $b_{i^{\prime}} \in B_{i^{\prime},i} \cap B_{i^\prime, {i+1}^\prime}$ which is also nonempty by Claim $(\ref{claimConsecutive})$.
			One can verify that $G[\{b_0,b_1,b_2, b_{0^\prime}, b_{1^\prime}, b_{2^\prime}\}]$ is an induced $\overline{C_6}$ in $G$ which is a contradiction since $G$ satisfies the conditions of Theorem \ref{theorem:vpg}. Hence $R_G$ is triangle free. $\blacksquare$
		\end{enumerate}
		
		Thus by the above sub-claim, $R_G$ is a triangle-free graph. Also $R_G$ contains no bigger induced odd cycles since it is a cocomparability graph. Thus $R_G$ is a bipartite graph and hence a comparability graph.
		$R_G$ is also a permutation graph since it is both cocomparability and comparability graph \cite{pnueli1971transitive}. Hence the claim.		
	\end{proof}
	
	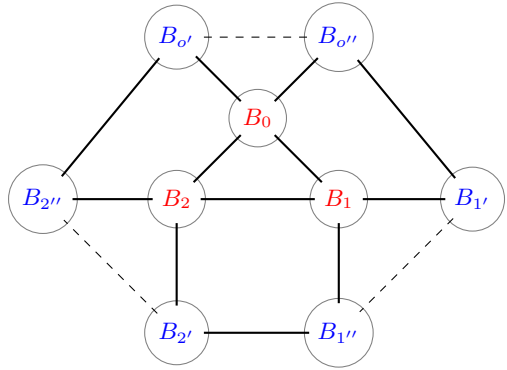
\begin{figure}
		\centering
		\begin{tikzpicture}[scale=.7, roundnode/.style={circle, draw=black!50, minimum size=4mm}]
		
		\node[roundnode] (B0) at (0,0) {\color{red}$B_0$};
		\node[roundnode] [below left of=B0, node distance=.6in] (B2) {\color{red}$B_2$};
		\node[roundnode] [below right of=B0, node distance=.6in] (B1) {\color{red}$B_1$};

		\node[roundnode] [above left of=B0, node distance=.6in] (B01) {\color{blue}$B_{o^\prime}$};
		\node[roundnode] [above right of=B0, node distance=.6in] (B02) {\color{blue}$B_{o^{\prime\prime}}$};
		
		\node[roundnode] [right of=B1, node distance=.7in] (B11) {\color{blue}$B_{1^\prime}$};
		\node[roundnode] [below of=B1, node distance=.7in] (B12) {\color{blue}$B_{1^{\prime\prime}}$};
		
		\node[roundnode] [left of=B2, node distance=.7in] (B22) {\color{blue}$B_{2^{\prime\prime}}$};
		\node[roundnode] [below of=B2, node distance=.7in] (B21) {\color{blue}$B_{2^\prime}$};

		\draw [black, thick, shorten <= -2pt, shorten >=-2pt] (B0) -- (B1);
		\draw [black, thick, shorten <= -2pt, shorten >=-2pt] (B0) -- (B2);
		\draw [black, thick, shorten <= -2pt, shorten >=-2pt] (B1) -- (B2);	 
		
		\draw [black, thick, shorten <= -2pt, shorten >=-2pt] (B0) -- (B01);
		\draw [black, thick, shorten <= -2pt, shorten >=-2pt] (B0) -- (B02);
		\draw [black, dashed, shorten <= -2pt, shorten >=-2pt] (B01) -- (B02);	 
		
		\draw [black, thick, shorten <= -2pt, shorten >=-2pt] (B1) -- (B11);
		\draw [black, thick, shorten <= -2pt, shorten >=-2pt] (B1) -- (B12);
		\draw [black, dashed, shorten <= -2pt, shorten >=-2pt] (B11) -- (B12);	
		
		\draw [black, thick, shorten <= -2pt, shorten >=-2pt] (B2) -- (B21);
		\draw [black, thick, shorten <= -2pt, shorten >=-2pt] (B2) -- (B22);
		\draw [black, dashed, shorten <= -2pt, shorten >=-2pt] (B21) -- (B22);	
		
		\draw [black, thick, shorten <= -2pt, shorten >=-2pt] (B02) -- (B11);
		\draw [black, thick, shorten <= -2pt, shorten >=-2pt] (B12) -- (B21);
		\draw [black, thick, shorten <= -2pt, shorten >=-2pt] (B22) -- (B01);
		\end{tikzpicture}
		\caption{Minimal graph $R_M$ having a $C_3$ with each edge part of an induced $C_4$.}
		\label{fig:triangleC_4}
	\end{figure}
	
	We relabeled the branch sets of the bipartite permutation graph $R_G$ in subsection \ref{subsectAlgo}. The following two lemmas are relevant to this labeling.
	
	\bigskip  \noindent \textbf{Lemma }\ref{lemma:branchContinuity} Statement : \emph{For any two branch sets $B_i,B_j$ of the same parity if $i<j$, then $B_i \prec_\sigma B_j$.}
	\begin{proof}
		Every branch set of $R_G$ is a connected graph since every edge contraction towards the formation of $R_G$ assures connectivity.
		Suppose $\exists x \in B_i$  and $\exists y \in B_j$ such that $y \prec_\sigma x$.
		Then any path in $G[B_i]$ from the leftmost vertex, say $z$, of $B_i$ to $x$ does not contain a vertex from the closed neighborhood of $y$. 
		Hence the triple $(z,y,x)$ is a forbidden triple in $\sigma$ which is a contradiction by Lemma \ref{lemma:forbiddentriple_contradiction}.
	\end{proof}
	
	\begin{lemma}
		\label{lemma:convexR_G}
		The relabeling of $V(R_G)$ respects the \emph{adjacency property}.
	\end{lemma}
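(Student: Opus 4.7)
The plan is to prove both orderings have the adjacency property by a single forbidden-triple argument, applied once and then invoked symmetrically. So suppose, for contradiction, some odd-indexed branch set $B_o$ has non-consecutive even-indexed neighbors in the order $B_0, B_2, B_4, \ldots$. Then there exist indices $e_1 < e_2 < e_3$, all even, with $B_o$ adjacent in $R_G$ to both $B_{e_1}$ and $B_{e_3}$ but not to $B_{e_2}$.

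Since $e_1, e_2, e_3$ share parity, Lemma~\ref{lemma:branchContinuity} gives $B_{e_1} \prec_\sigma B_{e_2} \prec_\sigma B_{e_3}$. Pick $x \in B_{e_1}$ and $v_1 \in B_o$ with $xv_1 \in E(G)$, and $y \in B_{e_3}$ and $v_3 \in B_o$ with $yv_3 \in E(G)$; such witnesses exist because an edge of $R_G$ corresponds to at least one edge of $G$ between the two branch sets. Pick any $w \in B_{e_2}$. Since each branch set is connected in $G$ (every edge contracted to form $R_G$ joined two vertices of the same branch set), there is a path $P$ from $v_1$ to $v_3$ within $G[B_o]$.

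Concatenating gives a path $x, v_1, P, v_3, y$ from $x$ to $y$ in $G$. None of these vertices lies in the closed neighborhood of $w$: the internal vertices lie in $B_o$, which is non-adjacent to $B_{e_2}$ in $R_G$; while $x \in B_{e_1}$ and $y \in B_{e_3}$ are same-parity as $B_{e_2}$, and Lemma~\ref{lemma:R_G}.\ref{claimBipartite} (bipartiteness of $R_G$) forbids any edge of $G$ between distinct same-parity branch sets; and none of them equals $w$. Because $x \prec_\sigma w \prec_\sigma y$, the triple $(x, w, y)$ is a forbidden triple in $\sigma$, contradicting Lemma~\ref{lemma:forbiddentriple_contradiction}.

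By the entirely symmetric argument (swapping the roles of odd and even), every even-indexed branch set has its odd-indexed neighbors consecutive in $B_1, B_3, B_5, \ldots$, establishing the adjacency property from both sides. The only subtlety, which is the real crux, is justifying that $x$ and $y$ are non-adjacent to $w$; this is precisely where bipartiteness of $R_G$ (Lemma~\ref{lemma:R_G}.\ref{claimBipartite}) is used, without which the triple might fail to be forbidden.
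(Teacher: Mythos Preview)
Your proof is correct, but it takes a somewhat different route from the paper's. The paper works entirely in the minor $R_G$ using the umbrella-free ordering $\sigma_{R_G}$: given $B_j, B_k, B_l$ of the same parity with $j<k<l$ and $B_i$ of opposite parity adjacent to $B_j$ and $B_l$ but not $B_k$, one does a small case split on whether $B_i \prec_{\sigma_{R_G}} B_k$ or $B_k \prec_{\sigma_{R_G}} B_i$ and finds an umbrella in $\sigma_{R_G}$ directly (using bipartiteness of $R_G$ to ensure the two same-parity branch sets are non-adjacent). Your approach instead drops down to $G$ and invokes the forbidden-triple lemma: you pick the three test vertices $x,w,y$ all in the even part, so their $\sigma$-order is fixed by Lemma~\ref{lemma:branchContinuity} without any case split, and the connecting path runs through $B_o$. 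This buys you freedom from locating $B_o$ in $\sigma_{R_G}$, at the cost of needing connectedness of branch sets and a slightly longer argument. Both arguments hinge on bipartiteness of $R_G$ at exactly the same point (ruling out edges between same-parity branch sets), which you rightly flag as the crux.
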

	\begin{proof}
		We know that $\sigma_{R_G}$ is an umbrella-free ordering. Moreover, the relabeling ensures that the labels in each part of $R_G$ inherit the order of $\sigma$. 
		In order to prove the \emph{adjacency property} of the relabeling of $V(R_G)$, 
		we assume that there are three branch sets $B_j,B_k,B_l$ $(j < k < l)$ of the same parity and a fourth branch set $B_i$ of the opposite parity adjacent to $B_j$ and $B_l$, but not $B_k$. Then either $\left(B_i \prec_{\sigma_{R_G}} B_k \prec_{\sigma_{R_G}} B_l\right)$ or $\left(B_j \prec_{\sigma_{R_G}} B_k \prec_{\sigma_{R_G}} B_i\right)$ forms an umbrella which contradicts the umbrella-freeness of $\sigma_{R_G}$.
	\end{proof}	
	\subsection{Step 2. Interval representations of branch-sets of $R_G$}
	
	The \emph{closure} of a branch set $B_i$ of $R_G$ is the set $B_i^* = B_i
	\cup \{B_{j,i} : B_iB_j \in E(R_G)\}$. 
	That is, $B_i^*$ consists of all the vertices in $B_i$ together with their
	neighbors in $G$.
	
	In order to fit together interval representations of the individual
	branch-sets, it turns out to be necessary to start with interval
	representations for the closure of each branch-set.
	We also need these interval representations to satisfy the properties
	listed in the following lemma.

	\bigskip \noindent \textbf{Lemma \ref{lemma:interval_R_G}} Statement: 
		\emph{For each branch set $B_i$ of $R_G$, $G[B_i^*]$ is an interval graph.
		Moreover, $G[B_i^*]$ has an interval representation $\mathcal{I}_i^*$
		satisfying	the following properties.
		\begin{enumerate}[label=(\roman*)]
			\item
			For all $x, y \in B_i^*$, we have the interval for $x$ to the left
			of interval for $y$ if and only if $x \prec_P y$.
			\item 
			For each neighbor $B_j$ of $B_i$, all the intervals corresponding
			to the vertices in $B_{j,i}$ are point intervals at a point
			$p_{j,i}$.
			\item 
			If $B_j$ and $B_k$ are two neighbors of $B_i$ such that $j<k$, then
			the point $p_{j,i}$ is to the left of the point $p_{k,i}$ in
			$\mathcal{I}_i^*$.
		\end{enumerate}}
	\begin{proof}
		By Lemma \ref{lemma:R_G}.\ref{claimBipartite}, $R_G$ is a bipartite
		permutation graph.
		If $G[B_i^*]$ contains an induced $C_4$, then by Lemma
		\ref{lemma:R_G}.\ref{underlyingC4} (sub-claim in Appendix \ref{appAlgo}), $R_G$ contains	an induced $C_4$ in the closed neighborhood of $B_i$.
		But since $R_G$ is bipartite, the closed neighborhood of any vertex is
		a star.
		Hence $G[B_i^*]$ is a $C_4$-free cocomparability graph and thus an
		interval graph \cite{gilmore1964characterization}.
		
		\bigskip \noindent $(i)$: Let $P_G(V(G),\prec_P)$ be the poset assumed in Algorithm \ref{algo:\B0drawing} and $P_i^* = (B_i^*,\prec_i^*)$ be the subposet of $P_G$ induced on $B_i^*$.  
		Clearly $P_i^*$ is $(2+2)$-free since $G[B_i^*]$ is $C_4$-free.  
		Hence $P_i^*$ is an interval order (Theorem~\ref{theorem:fishburn}).
		Pick any interval representation of the interval order $P_i^*$ and
		consider it as an interval representation $\mathcal{I}_i^*$ for the
		interval graph $G[B_i^*]$. 
		Thus if $I_x$ is to the left of $I_y$ in $\mathcal{I}_i^*$, that is $I_x \prec_{\mathcal{I}_i^*} I_y$, then $x \prec_i^* y$ and hence $x \prec_P y$. Similarly, if $x \prec_P y$, then $x \prec_i^* y$ and hence $I_x \prec_{\mathcal{I}_i^*} I_y$.
		
		\bigskip \noindent $(ii)$: 
		By Lemma \ref{lemma:R_G}.\ref{claimClique}, $B_{i,j} \cup B_{j,i}$ is a
		clique.
		As the vertices corresponding to the clique share a common point in the
		interval representation $\mathcal{I}_i^*$, there exists a point
		$p_{j,i}$ common to all the intervals corresponding to the vertices in
		the set $B_{j,i} \cup B_{i,j}$.
		The point $p_{j,i}$ is not contained in an interval corresponding to
		any other vertex in $B_i$ since they are non-adjacent to the vertices
		in $B_{j,i}$.  
		Since the vertices in $B_{j,i}$ have no neighbors in $G[B_i^*]$ other
		than $B_{j,i} \cup B_{i,j}$, we can shrink all the intervals
		corresponding to them to the single point $p_{j,i}$ in
		$\mathcal{I}_i^*$.
		
		\bigskip \noindent $(iii)$: 
		Since $j$ and $k$ have the same parity and $j<k$, Lemma
		\ref{lemma:branchContinuity} guarantees that $\forall x \in B_j,
		\forall y \in B_k$, $x \prec_\sigma y$. Since $\sigma$ is the linear extension of $P_G$ and $x$ is non-adjacent to $y$ in $G$, $x \prec_P y$ which also implies that $x	\prec_i^* y$. Thus the point intervals at $p_{j,i}$ is to the left of the point intervals at $p_{k,i}$ in $\mathcal{I}_i^*$.
	\end{proof}
	We remove the intervals other than those corresponding to the vertices of
	$B_i$ from $\mathcal{I}_i^*$ to get an interval representation
	$\mathcal{I}_i$ of $G[B_i]$. 
	But we retain the location of the point $p_{j,i}$ corresponding to each
	neighbor $B_j$ of $B_i$.
	
	\subsection{Step 3. \B0 representation of $G$}
	
	In this final step, we draw a \B0 representation $D$ of $G$ by combining
	interval representations $\mathcal{I}_i$ of each $B_i$ in $V(R_G)$
	and a \B0 representation of $R_G$.
	
	The drawing procedure is rewritten here to maintain continuity for readers.	
	Let $e$ and $o$ respectively (with further subscripts if needed) denote the
	even and odd indices of the branch sets of $R_G$.
	
	\begin{enumerate}[label=(\roman*)]
		\item 
		For each odd-indexed branch set $B_o$, $\mathcal{I}_o$ is drawn
		vertically from the point $(o,(e_1-0.5))$ to $(o,(e_2+0.5))$ where
		$B_{e_1}$ and $B_{e_2}$ are the leftmost and rightmost neighbors of
		$B_o$ in $\sigma_{R_G}$.
		\item 
		Stretch or shrink the intervals in each vertical interval
		representation $\mathcal{I}_o$ without changing their intersection
		pattern, so that for each neighbor $B_e$ of $B_o$, the point $p_{e,o}$
		is at $(o,e)$.
		This can be done since the intersection pattern of an interval
		representation with $n$ intervals is solely determined by the order of
		the corresponding $2n$ endpoints.
		
		\item 
		For each even-indexed branch set $B_{e}$, $\mathcal{I}_e$ is drawn
		horizontally from the point $((o_1-0.5),e)$ to $((o_2+0.5),e)$ where
		$B_{o_1}$ and $B_{o_2}$ are the leftmost and rightmost neighbors of
		$B_e$ in $\sigma_{R_G}$.
		\item 
		Stretch or shrink the intervals in each horizontal interval
		representation $\mathcal{I}_{e}$ without changing their intersection
		pattern, so that for each neighbor $B_o$ of $B_e$, the point $p_{o,e}$
		is at $(o,e)$.
	\end{enumerate}
	\bigskip \noindent \textbf{Proposition \ref{proposeD}} Statement:
		\emph{The B$_0$-VPG representation $D$ is precisely a B$_0$-VPG
		representation of the cocomparability graph $G$.}
	\begin{proof}
		We need to verify the pairwise relationships of $G$ in $D$.  The
		adjacency and non-adjacency among the vertices inside each branch set
		$B_i$ are guaranteed since we use the interval representation $I_i$ in
		$D$.
		
		For two adjacent branch sets $B_o$ and $B_e$, the points $p_{o,e}$ and
		$p_{e,o}$ coincide at $(o,e)$. 
		Since  $B_{o,e} \cup B_{e,o}$ forms a clique in $G$ (Lemma
		\ref{lemma:R_G}.\ref{claimClique}), we need to show that the
		corresponding paths share a common point in $D$. 
		The point $p_{o,e}$ (respectively $p_{e,o}$) is the common point of the
		intervals corresponding to the vertices in $B_{o,e}$ (respectively
		$B_{e,o}$).  Since both these two points fall at location $(o,e)$
		in the plane, all the paths corresponding to vertices in $B_{o,e}
		\cup B_{e,o}$ have a common intersection.
		If two vertices $x \in B_{o}$ and $y \in B_{e}$ are non-adjacent,
		then the intervals $I_x \in \mathcal{I}_{o}, I_y \in \mathcal{I}_{e}$
		satisfy either $p_{e,o} \notin I_x $ or $p_{o,e} \notin I_y$ or both.
		Thus $I_x$ and $I_y$ do not touch each other in $D$.
		
		In $D$, if we consider each $\mathcal{I}_i$ as a single path,
		then we can use the adjacency property to verify that we get
		a \B0 representation of $R_G$. 
		Hence the non-adjacencies of the two vertices belonging to two
		non-adjacent branch sets of $R_G$ are maintained in $D$.
	\end{proof}
	
	This completes the proof of Theorem~\ref{theorem:vpg} since we were able to construct a \B0 representation $D$ for the cocomparability graph $G$ satisfying the conditions of Theorem \ref{theorem:vpg}.

\end{subappendices}

\end{document}